\numberwithin{equation}{section}
\theoremstyle{plain}
\begin{document}

\title{ Higher regularity of homeomorphisms in the Hartman-Grobman theorem for  semilinear evolution equations
\footnote{ This paper was jointly supported from the National Natural
Science Foundation of China under Grant (No. 11931016 and 11671176) and  Grant Fondecyt 1170466.}
}
\author
{
Weijie Lu$^{a}$\,\,\,\,\,
Manuel Pinto$^{b}$\,\, \,\,
Y-H Xia$^{a}\footnote{Corresponding author. Y-H. Xia, xiadoc@outlook.com;yhxia@zjnu.cn. Address: College of Mathematics and Computer Science,  Zhejiang Normal University, 321004, Jinhua, China}$
\\
{\small \textit{$^a$ College of Mathematics and Computer Science,  Zhejiang Normal University, 321004, Jinhua, China}}\\
{\small \textit{$^b$ Departamento de Matem\'aticas, Universidad de Chile, Santiago, Chile }}\\
{\small Email:luwj@zjnu.edu.cn;  pintoj.uchile@gmail.com;  yhxia@zjnu.cn.}
}

\maketitle

\begin{abstract}

  Hein and Pr\"{u}ss \cite{Hein-Pruss1} presented a version of Hartman-Grobman type $C^{0}$ linearization result for semilinear hyperbolic evolution equations.
  They showed that the linearising map (homeomorphism) $H$ and its inverse $G=H^{-1}$ are H\"{o}lder continuous.
  {\bf An important question}: is it possible to improve the regularity of the homeomorphisms $H$?
  In the present paper, we first formulate the result that  the homeomorphisms $H$  in the Hartman-Grobman theorem is Lipchitzian, but the inverse $G$ is merely H\"{o}lder continuous.
We also give a generalized local linearization result in this paper. Finally, some applications end the paper.
    As pointed out by Backes et al. \cite{BDK-JDE}, even if the diffeomorphism $F$ is $C^{\infty}$, the conjugacy (homeomorphism) can fail to be locally Lipschitz. The homeomorphisms are in general only locally H\"older continuous. In fact,it is proved that in all the previous works on Hartman-Grobaman theorem in $C^0$ linearization, the homeomorphisms are H\"older continuous.  However, by establishing two effective dichotomy integral inequalities, we prove that the conjugacy is Lipchitzian, but the inverse is H\"{o}lder continuous. \\
{\bf keywords}: Hartman-Grobman theorem; semilinear evolution equations; homeomorphisms; linearization

\end{abstract}

\section{Introduction}
\subsection{Motivations and novelty}
   Recently, Hein and Pr\"{u}ss \cite{Hein-Pruss1} gave a version of   $C^{0}$ linearization result for semilinear hyperbolic evolution equation on Banach space. Hein and Pr\"{u}ss \cite{Hein-Pruss1} considered the following two evolution equations
\begin{eqnarray}\label{Hein-Pruss-eq}
\partial_{t}v=Av+r(v) \quad
\mathrm{and} \quad
\partial_{t}u=Au,
\end{eqnarray}
where $A$ is the generator of $C_{0}$-group $e^{At}$ on Banach space $X$ and $r: X\rightarrow X$ is bounded and Lipschitzian.
    Hein and Pr\"{u}ss \cite{Hein-Pruss1} successfully transferred the Hartman-Grobman theorem from ODEs to semilinear hyperbolic evolution equations. In fact, the general proof of finite dimension cannot directly be extended to the infinite dimension case.

     Hein and Pr\"{u}ss \cite{Hein-Pruss1}  proved that both the homeomorphism $H$ and its inverse $G=H^{-1}$ are H\"older continuous.  They believed that their estimates of H\"older exponent are optimal. Higher regularity of the homeomorphisms seems to be an interesting and delicate question. A question is: Is it possible to improve the regularity of the homeomorphisms? This paper gives an positive answer.
    Therefore, in this paper, by establishing two effective dichotomy integral inequalities, we improve the regularity of the homeomorphisms.

  As pointed out by Backes et al. \cite{BDK-JDE}, even if the diffeomorphism $F$ is $C^{\infty}$, the conjugacy (homeomorphism) can fail to be locally Lipschitz. The homeomorphisms are in general only (locally) H\"older continuous (see \cite{BDK-JDE,B-V1,B-V2,B-V3,B-V4,Belitskii3,DZZ-PLMS, Hein-Pruss1,Shi-Zhang1, Zou-Xia}). However, we prove that the conjugacy (homeomorphism) is Lipchitzian, but the inverse is H\"{o}lder continuous. Our result is the first one to observe the higher regularity of homeomorphisms in the Hartman-Grobman theorem.

Now we summarize our goals  of this paper as follows.\\
 \noindent    {\bf The first purpose } is precisely to improve the regularity of the conjugacy.
    More specifically, if the mild solutions of semilinear system is bounded, then the linearising map $H$ can be Lipschitzian,
but its inverse $G$ is merely H\"{o}lder continuous.
    Without this premise, we say that they are both H\"{o}lder continuous.
\\
 \noindent   {\bf The second purpose} is to weaken an important assumption in Hein and Pr\"{u}ss (2016) \cite{Hein-Pruss1}.
    Hein and Pr\"{u}ss obtained the Hartman-Grobman theorem by setting that the whole $C_{0}$-group $e^{At}$ admits a dichotomy.
    In this paper, we reduce this assumption.
    In fact,  it is enough to assume that the $C_{0}$-group $e^{At}$ partially satisfies the exponential dichotomy.
    More specifically, equations (\ref{Hein-Pruss-eq}) can be rewritten as:
    \begin{eqnarray}
    \begin{cases}
     \partial_{t}v_{1}=Av_{1}+r(v_{1},v_{2}), \\
     \partial_{t}v_{2}=Bv_{2},
    \end{cases}
    \; \mathrm{and} \quad
    \begin{cases}
     \partial_{t}u_{1}=Au_{1}, \\
     \partial_{t}u_{2}=Bu_{2},
    \end{cases}
    \end{eqnarray}
where $A, B$ are the generators of $C_{0}$-groups $e^{At}, e^{Bt}$ on Banach spaces $X, Y$, respectively.
    The nonlinear term $r:X\times Y\to X$ is bounded and Lipschitz continuous.
    However, there is only required that $C_{0}$-group $e^{At}$ admits a dichotomy projection, but not requirement on $C_{0}$-group $e^{Bt}$.  \\
  \noindent  {\bf The third purpose} is to give a generalized version of local linearization.
    We suppose that the nonlinear term $r(x)$ admits a non-Lipschitz continuous with respect to $x$ on a small closed ball, that is:
    \[ |r(x_{1})-r(x_{2})|\leq \mathcal{L}(\max\{|x_{1}|,|x_{2}|\})|x_{1}-x_{2}|,\]
where $\mathcal{L}(\cdot): [0,\infty)\rightarrow [0,\infty)$ is a continuous, nondecreasing function and $\mathcal{L}(0)\equiv 0$.
    Hence, by a $C^{\infty}$ bump function, we obtain a local linearization result.\\
  \noindent  {\bf The fourth purpose} is to establish two effective dichotomy integral inequalities in Section 2.3, which has a better estimate than Gronwall (Bellman) inequalities.
    It will be a novel and powerful tool to help us deal with the properties of linearising maps. To show its advantage over Bellman inequality, we also use Bellman inequality to prove the regularity, but we obtain that both the homeomorphisms are H\"older continuous.\\
     \noindent  {\bf The last purpose} is to apply our results to some applications including  the Hodekin-Huxley equations for the nerve axon.

\subsection{Mechanism of our improvements}
    In general, to prove the regularity of the linearising map $H$, one employs the Bellman (Gronwall) inequality, see for examples \cite{Hein-Pruss1,Shi, Shi-Zhang1,Xia1,Zou-Xia}.
    However, the disadvantage of the Bellman inequality is that it will result in an exponential estimate of the form $e^{\alpha t} (\alpha>0)$, one can refer
to Lemma \ref{Lemma-10} in this paper.
    It is expansive, and the expansive estimate leads us to prove that the homeomorphism is H\"{o}lder continuous, not Lipschitzian.
    Therefore, most of the previous works on the regularity of conjugacy  \cite{DZZ-PLMS,Hein-Pruss1, Shi,Shi-Zhang1,Xia1,Zgl-HG,Zou-Xia} is H\"{o}lder continuous.

    On the contrary, the advantage of the dichotomy integral inequality (see Lemma \ref{Lemma-9} in Section 2.3) is that it yields an exponential decay of the form
$e^{-\alpha_{1} t} (\alpha_{1}>0)$.
    Thus, by dichotomy inequality, we can prove the Lipschitz continuity of the homeomorphism $H$ due to a better estimate (the exponential decay).
   However, if you use Bellman inequality, it is impossible to prove the Lipschitz continuity of the homeomorphisms due to the bad estimate $e^{\alpha t}$ with $\alpha>0$.


\subsection{History of linearization}
    The classical Hartman-Grobman theorem \cite{Grobman1,Hartman1,Robinson} states that if $x^{*}$ is a hyperbolic equilibrium point of a $C^{1}$ vector field $F(x)$
with flow $\varphi_{t}(x)$,
then there exist a neighborhood $\mathcal{O}$ of $x^{*}$ such that $\varphi$ is topologically conjugated to its linearization on $\mathcal{O}$. The equivalent function $H$  in general is not in $C^{1}$ (see Chicone \cite{Chicone1}, Rodrigues and Sol\'a-Morales \cite{R-S-3}). Equivalently, it can be stated that if $x^{*}$ is a hyperbolic fixed point of a $C^{1}$ diffeomorphism $F:\mathbb{R}^n\rightarrow\mathbb{R}^n$,
then there exist a neighborhood $\mathcal{O}$ of $x^{*}$ such that $F$ on $\mathcal{O}$ is topologically conjugated to $DF(x^{*})$. It also has a global version. 
    Palmer \cite{Palmer1} firstly extended the global version of Hartman-Grobman theorem to the nonautonomous differential equations in  finite dimensional space. Palmer's linearization theorem states that
     if the nonautonomous linear system admits an exponential dichotomy, and the nonlinear perturbation is bounded and Lipschitzian,
then nonlinear system can be linearized. 
    To weaken the Palmer's conditions,
    Jiang \cite{Jiang1} presented a version of Hartman-Grobman theorem by setting that the linear system admits a generalized exponential dichotomy.
    Huerta \cite{Huerta2,Huerta1} constructed a topological conjugacy between linear system and an unbounded nonlinear perturbation,
while nonautonomous linear system admits a nonuniform contraction.
Barreira and Valls \cite{B-V1,B-V2,B-V3,B-V4} proved several versions of Hartman-Grobman theorem in different situations with the assumption that the linear systems admit a nonuniform dichotomy. Moreover, they proved that the topological conjugacy is H\"{o}lder continuous. Recently,  Backes et al. \cite{BDK-JDE} obtained a version of Hartman-Grobman theorem without the assumption that the linear system admits exponential dichotomy. Their results generalized those of Reinfelds and  \v{S}teinberga \cite{Reinfelds-IJPAM}. Moreover, Backes \cite{BDK-JDE} proved  the H\"{o}lder continuity of the topological conjugacy and its inverse. Zgliczy\'nski \cite{Zgl-HG} established a version of Hartman-Grobman theorem
for diffeomorphisms and ODEs by geometric proofs based on covering relations and cone
conditions \cite{Zgl-JDE}.
    Different versions of the Hartman-Grobman theorem have been presented for the differential equations with piecewise constant argument \cite{Papaschinopoulos-A,Pinto-A, Zou-Xia}, 
dynamic systems on time scales \cite{Potzche1}, the instantaneous impulsive system \cite{Fenner-Pinto,Xia1}.

  Now we pay our attention to the Hartman-Grobman theorem in infinite dimensional space. In fact, some arguments (such as Brouwers fixed point theorem) for the finite-dimensional proofs  are not valid for the infinite dimensional case.
    In 1969, Pugh \cite{Pugh1} gave a global version of Hartman's theorem on Banach space by providing that $A$ is bounded operator.
    Later, Lu \cite{Lu1} successfully proved a Hartman-Grobman theorem for the scalar reaction-diffusion equations. Moreover, Bates and Lu \cite{Lu2} obtained a Hartman-Grobman theorem for Cahn-Hilliard equation and phase field equations. They proved the linearization of these semilinear partial differential equations
    based on the invariant manifold theory and the invariant folation theory.
    A reduction theorem was proven in Reinfelds and Sermone \cite{Reinfelds1}. Belitskill \cite{Belitskii3} studied a hyperbolic diffeomorphism in a Banach space
and prove that  the diffeomorphism admits local $\alpha$-H\"older linearization under some conditions.
In bad situation, $C^{0}$ linearization is not enough to observe
the dynamic behaviors, for instant, to distinguish the node from the focus. To this purpose,
   Sternberg \cite{Sternberg1,Sternberg2} initially investigated $C^{r}$
linearization for $C^{k}(1\leq r\leq k\leq \infty)$ diffeomorphisms. Elbialy \cite{ElBialy1} and Rodrigues and Sol\`{a}-Morales \cite{R-S-1} improved Hartman's result \cite{Hartman2} to Banach space.
   Recently, Zhang et al. \cite{ZWN-JFA} improved the lower bound of $\alpha$ to lower the condition of $C^{1}$ linearization for planar contractions. They showed that the derivatives of the transformations in their $C^{1}$ linearization are H\"{o}lder continuous and prove that the estimates for the H\"{o}lder exponent  can not be improved anymore.  Zhang {\it et al} \cite{ZWN-JDE}
obtained a set of sharpness conditions for the $C^1$ linearization of hyperbolic diffeomorphisms. They also proved that the $C^1$ linearization is actually a $C^{1,\beta}$ linearization and gave sharp estimates for $\beta$.
  Zhang et al. \cite{ZWN-MA} studied the sharp regularity of linearization for $C^{1,1}$ hyperbolic diffeomorphisms in a Banach space. Futher, the $\alpha$ H\"older linearization of hyperbolic diffeomorphisms with resonance were studied in \cite{ZWN-ETDS}.
  Zhang et al. \cite{ZWN-TAMS} proved that the local homemorphism $H(x)$ is differentiable at the fixed point for a $C^{1}$ diffeomorphism
$G(x)$ with $DG(x)$ being $\alpha$-H\"{o}lder continuous at the fixed point.
    Recently, Dragi\v{c}evi\'{c} et al. \cite{DZZ-PLMS} extend van Strien's result \cite{Strien} of simultaneously differentiable and H\"older linearization to
nonautonomous differential equations with a nonuniform exponential dichotomy. Dragi\v{c}evi\'{c} et al. \cite{ZWN-MZ} also stuied the smooth linearization of nonautonomous difference equations with a nonuniform dichotomy.
Some more delicate conditions for $C^{r}$ (or $C^{1}$)-smooth linearization  obtained by Sell \cite{Sell1},
Belitskill \cite{Belitskii1}, Rodrigues and Sol\`{a}-Morales \cite{R-S-2}.

\subsection{Notations and Basic concepts}
  Let $(X,|\cdot|_X)$ and $(Y,|\cdot|_Y)$ denote two arbitrary Banach spaces.
  For convenience, both norms $|\cdot|_X$ and $|\cdot|_Y$ will be denoted by $|\cdot|$.
  Let $\mathbb{J}\subseteq \mathbb{R}$ be any real interval. Define
\[ \mathbb{BC}(\mathbb{J}, X):=\{ x: \mathbb{J}\rightarrow X |
        x(t)\; \mathrm{is}\; \mathrm{continuous}\; \mathrm{and}\; \sup\limits_{t\in \mathbb{J}} |x(t)| < \infty \} \]
and $\|x\|:=\sup\limits_{t\in \mathbb{J}} |x(t)|$. Let $U$ be an open subset of $X$, and define
\[ \mathbb{BC}(U,X):=\{f: U\rightarrow X | f(x)\; \mathrm{is}\; \mathrm{continuous}\; \mathrm{and}\; \sup\limits_{x\in U}|f(x)|<\infty\}\]
and $|f|_{\infty}:=\sup\limits_{x\in U}|f(x)|$.

  Obviously, $(\mathbb{BC}(\mathbb{J}, X), \|x\|)$ and $(\mathbb{BC}(U,X), |f|_{\infty})$ are both Banach spaces with norms $\|\cdot\|$ and $|\cdot|_{\infty}$, respectively.

\newtheorem{myddef}{\bf{Definition}\rm}[section]
\begin{myddef}\label{homeomorphism}(Topological Conjugacy, \cite{Shi-Zhang1})
  $x'=\varphi(x)$ and $y'=\phi(y)$ are said to be topologically conjugated if there exists a homeomorphism $H$ of $X$ into $X$
such that $H$ sends the solution of $x'=\varphi(x)$ onto the solution of $y'=\phi(y)$.
\end{myddef}

\begin{myddef}\label{exponential-dichotomy}(Exponential Dichotomy, \cite{Pruss2,Pruss3,Pruss4})
     A projection $P_{+}\in \mathbb{B}(X)$ is said to be a dichotomy projection for the  $C_{0}$-semigroup $e^{At}$ in $X$, if there exist constants
   $k\geq 1, \alpha >0$ such that the following conditions are satisfied:\\
{\bf (S1)} $P_{+}e^{At}=e^{At}P_{+}$, for all $t\geq 0$;\\
{\bf (S2)} $|e^{At}P_{+}x|\leq ke^{-\alpha t}|P_{+}x|$, for all $x\in X, t\geq 0$;\\
{\bf (S3)} $e^{At}P_{-}$ extends to a $C_{0}$-group on $R(P_{-})$;\\
{\bf (S4)} $|e^{At}P_{-}x|\leq k e^{\alpha t}|P_{-}x|$, for all $x\in X, t\leq 0$;\\
where $\mathbb{B}(X)$ is a bounded linear operator on $X$, $P_{-}= I_{X}-P_{+}$, $I_{X}$ is the identity operator.\\
  Moreover, the $Green$ $kernel$ corresponding to the exponential dichotomy is denoted by
 \begin{equation*}
 G_{A}(t)=
 \begin{cases}
 \ \ e^{At}P_{+}, \qquad t\geq 0,\\
 -e^{At}P_{-}, \qquad t<0.
 \end{cases}
 \end{equation*}
\end{myddef}

\subsection{Outline of this paper}
    We organize this paper as follows:
    our main results are stated in Section 2, where we state the global linearization results and local linearization results, respectively.
    Some preliminary results are presented in Section 3.
    Rigorous proofs are given to show the regularity of the linearising maps in Section 4.
    Finally, several applications are given to demonstrate our results.

\section{Statement of main results}

  In the present paper, we consider the following semilinear evolution equations:
\begin{equation}\label{semilinear-eq}
\begin{cases}
\partial_{t}u_{1}=Au_{1}+f(u_{1},u_{2}), \\
\partial_{t}u_{2}=Bu_{2}, \\
u_{1}(0)=u_{10}, u_{2}(0)=u_{20},
\end{cases}
\end{equation}
where $A, B$ are the generators of $C_{0}$-semigroups $e^{At}, e^{Bt}$ on the Banach spaces $X, Y$, respectively.
 The nonlinear term $f:X\times Y\to X$ is Lipschitzian.
  It is well known that the Cauchy problem of equation (\ref{semilinear-eq}) has a unique $mild$ $solution$ on the half line
$\mathbb{R}^{+}$.
  In addition, if $e^{At}, e^{Bt}$ is even $C_{0}$-groups, then this unique $mild$ $solution$ exists globally, i.e., on $\mathbb{R}$.

\subsection{Main results on the global linearization}
  Now, we are ready to present our main results on the global linearization in this paper.
  Firstly, we present the result on the existence of {\bf topological conjugacy}.
\newtheorem{myythm}{\bf{Theorem}\rm}[section]
\begin{myythm}\label{Thm-Linearization}
  Let $A, B$ be the generators of $C_{0}$-groups $e^{At}, e^{Bt}$ on Banach spaces $X, Y$, respectively.
  Assume that $e^{At}$ admits an exponential dichotomy.
  If nonlinear term $f$ is bounded (denoted it by $|f|_{\infty}$), Lipschitzian (Lipschitz constant denoted it by $|f|_{Lip}$), and satisfies
\begin{equation}\label{third-linear-condition}
 4k\alpha^{-1}\cdot |f|_{Lip}<1.
\end{equation}
  Then system (\ref{semilinear-eq}) is  topologically conjugated to its linear equations
\begin{equation}\label{linear-eq}
 \begin{cases}
\partial_{t}v_{1}=Av_{1}, \\
\partial_{t}v_{2}=Bv_{2},\\
 v_{1}(0)=v_{10}, v_{2}(0)=v_{20}.
 \end{cases}
\end{equation}
  Moreover, the linearising map $H(\cdot)$ and its inverse $G(\cdot)$ satisfy: $H(u)-u \in\mathbb{ BC}(X), G(v)-v \in \mathbb{BC}(X)$, for any $u, v\in X$.
\end{myythm}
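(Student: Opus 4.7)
The plan is to realise the conjugacy as a bounded perturbation of the identity acting only on the $X$-component, since the second-coordinate equations $\partial_t u_2=Bu_2$ and $\partial_t v_2=Bv_2$ already coincide. I would write $H(u_{10},u_{20})=(u_{10}+h(u_{10},u_{20}),u_{20})$ and $G(v_{10},v_{20})=(v_{10}+g(v_{10},v_{20}),v_{20})$ with $h,g\in\mathbb{BC}(X\times Y,X)$. The whole construction rests on the Green kernel $G_A$ of the dichotomy of $e^{At}$, which is exactly why the theorem needs hyperbolicity only for the first block.

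First I would define $h$ directly along the nonlinear trajectory by
$$h(u_0):=-\int_{-\infty}^{\infty}G_A(-s)\,f\!\left(u_1(s;u_0),u_2(s;u_0)\right)ds,$$
where $(u_1,u_2)(\cdot;u_0)$ is the mild solution of (\ref{semilinear-eq}). The dichotomy bound $|G_A(-s)|\le ke^{-\alpha|s|}$ and boundedness of $f$ yield $|h|_\infty\le 2k\alpha^{-1}|f|_\infty$, while continuous dependence of mild solutions on initial data combined with dominated convergence gives continuity of $h$. The intertwining identity $H(T_tu_0)=S_tH(u_0)$ reduces to the algebraic identity $h(T_tu_0)-e^{At}h(u_0)=e^{At}u_{10}-u_1(t;u_0)$, which I would verify by the substitution $\tau=t+s$ inside $h(T_tu_0)$ and the elementary Green-kernel relation $G_A(t-\tau)-e^{At}G_A(-\tau)=e^{A(t-\tau)}\chi_{[0,t]}(\tau)$ (and its mirror version for $t<0$); this collapses the difference to the Duhamel integral $\int_0^t e^{A(t-\tau)}f(u(\tau))\,d\tau=u_1(t)-e^{At}u_{10}$.

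Next I would construct $g$ as the unique fixed point in $\mathbb{BC}(X\times Y,X)$ of the Banach contraction
$$\Phi[g](v_0):=\int_{-\infty}^{\infty}G_A(-s)\,f\!\left(e^{As}v_{10}+g(S_sv_0),\;e^{Bs}v_{20}\right)ds,$$
whose Lipschitz constant is at most $2k\alpha^{-1}|f|_{\mathrm{Lip}}<1/2$ by (\ref{third-linear-condition}). Continuity and the bound $|g|_\infty\le 2k\alpha^{-1}|f|_\infty$ are standard. To check that $G(v_0):=(v_{10}+g(v_0),v_{20})$ intertwines the flows, I would verify that $w(t):=e^{At}v_{10}+g(S_tv_0)$ solves the first-component ODE $w'=Aw+f(w,e^{Bt}v_{20})$ with initial value $v_{10}+g(v_0)$; differentiating the $G_A$-representation of $g(S_tv_0)$ (obtained by writing out $\Phi[g](S_tv_0)$ and changing variables to $\tau=s+t$) produces Leibniz boundary terms $P_+f+P_-f=f$ that supply exactly the required forcing, while the integral terms reassemble into $Ag(S_tv_0)$.

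The hardest step is proving that $H$ and $G$ are inverses. Substituting the trajectory $u_1(s;G(v_0))=e^{As}v_{10}+g(S_sv_0)$ (whose validity was just established) into the formula for $h$ gives $h(G(v_0))=-g(v_0)$, so $H\circ G=\mathrm{id}$ telescopes immediately. For $G\circ H=\mathrm{id}$, I would introduce $\psi(u_0):=h(u_0)+g(H(u_0))$ and combine the two intertwining relations to get
$$\delta(t):=\psi(T_tu_0)=u_1(t;G(H(u_0)))-u_1(t;u_0),$$
which is uniformly bounded in $t$ because $h$ and $g$ are. Since $\delta$ solves a linear inhomogeneous $X$-equation with forcing controlled by $|f|_{\mathrm{Lip}}|\delta|$, the dichotomy together with the boundedness of $\delta$ yields the Green-kernel representation
$$\delta(t)=\int_{-\infty}^{\infty}G_A(t-s)\bigl[f(u_1(s;G(Hu_0)),u_2(s))-f(u_1(s;u_0),u_2(s))\bigr]ds,$$
so $\|\delta\|_\infty\le 2k\alpha^{-1}|f|_{\mathrm{Lip}}\|\delta\|_\infty$. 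Condition (\ref{third-linear-condition}) forces $\delta\equiv 0$ and in particular $\psi(u_0)=\delta(0)=0$. This uniqueness-via-dichotomy argument, which is the only place where the sharper factor $4k\alpha^{-1}|f|_{\mathrm{Lip}}<1$ is truly exploited, is the main obstacle and the technical heart of the proof.
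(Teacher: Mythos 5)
Your proposal is correct and follows essentially the same route as the paper: $h$ is built by the Green-kernel convolution along nonlinear trajectories, $g$ by a Banach contraction with constant $2k\alpha^{-1}|f|_{\mathrm{Lip}}<1/2$, the intertwining identities are checked on mild solutions, and $G\circ H=\mathrm{id}$ is forced by uniqueness of bounded solutions under the dichotomy (the paper's Lemmas 3.1--3.8). The only cosmetic difference is your direct substitution $h(G(v_0))=-g(v_0)$ for the $H\circ G$ direction, where the paper instead observes that $H_1(G(v))-v_1$ is a bounded mild solution of $z'=Az$ and hence vanishes; both are valid.
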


\newtheorem{myyyrem}{\bf{Remark}\rm}[section]
\begin{myyyrem}
    There is no requirement on the generator $B$ in Theorem \ref{Thm-Linearization}. It means that $B$ can be a non-hyperbolic operator.
\end{myyyrem}

\noindent
  Next theorem is for the {\bf regularity} of the transformation $H$ and its inverse $G$.
{\color{blue}
\begin{myythm}\label{Thm-Regulaity}
Assume that all the conditions of Theorem \ref{Thm-Linearization} hold.
  If there exist $\xi_1:=x-\bar{x}\in P_+ X$ and $\eta_1:=y-\bar{y}\in P_+ Y$
(or $\xi_2:=x-\bar{x}\in P_- X$ and $\eta_2:=y-\bar{y}\in P_- Y$),
then the transformation $H$ is Lipschitz continuous, but its inverse $G$  is H\"{o}lder continuous.
  More specifically, there exist positive constants $p_{1}, p_{2}>0$ and $0<q<1$ such that for $u=(x,y)^T$ and $\bar{u}=(\bar{x},\bar{y})^T$
  \begin{equation*}
  \begin{cases}
  |H(u)-H(\bar{u})|\leq p_{1}\cdot |u-\bar{u}|, \\
  |G(u)-G(\bar{u})|\leq p_{2}\cdot|u-\bar{u}|^{q}.
  \end{cases}
\end{equation*}
\end{myythm}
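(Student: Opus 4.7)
The plan is to build directly on the construction in Theorem 2.2, where the conjugacy takes the form $H(u)=u+h(u)$ with $h\in \mathbb{BC}(X\times Y,X\times Y)$, and analogously $G(v)=v+g(v)$. The standard Green-kernel representation one obtains for $h$ and $g$ reads
\[
h(u_0)=-\int_{-\infty}^{\infty} G_A(-s)\,f\bigl(\Phi_s u_0\bigr)\,ds,\qquad g(v_0)=-\int_{-\infty}^{\infty} G_A(-s)\,f\bigl(\Psi_s(v_0+g(v_0))\bigr)\,ds,
\]
where $\Phi_t$ and $\Psi_t$ are the semilinear flow of (2.2) and the linear flow of (2.4), respectively. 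Thus the regularity of $H$ and $G$ reduces to bounds on $|h(u)-h(\bar u)|$ and $|g(v)-g(\bar v)|$ in terms of $|u-\bar u|$ and $|v-\bar v|$.

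\textbf{Lipschitz continuity of $H$.} Fix $u=(x,y)$, $\bar u=(\bar x,\bar y)$ satisfying the projection hypothesis, say $\xi_1=x-\bar x\in P_+X$ (the $P_-$ case is symmetric, handled by reversing time). I first control the separation $|\Phi_s u-\Phi_s\bar u|$ along the nonlinear flow: using the variation-of-constants formula, the Lipschitz bound on $f$, and the fact that the initial difference lives in $P_+X$, one obtains a one-sided decaying estimate of the form $|\Phi_s u-\Phi_s\bar u|\le K\,e^{-\gamma s}|u-\bar u|$ for $s\ge 0$ (with the analogous estimate on the other half-line in the $P_-$ case). Substituting into the expression for $h(u)-h(\bar u)$, splitting the integral at $s=0$, and combining the dichotomy bounds (S2) and (S4) with this trajectory estimate places the integrand in exactly the form required by the dichotomy integral inequality of Section 2.3 (Lemma 2.9). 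That inequality produces a decaying envelope $e^{-\alpha_1|s|}|u-\bar u|$, and integration yields $|h(u)-h(\bar u)|\le C|u-\bar u|$, so $H$ is Lipschitz with constant $p_1=1+C$.

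\textbf{Hölder continuity of $G$.} For $G$ the analogous identity with $\Psi_t$ replacing $\Phi_t$ is available, but since no dichotomy is imposed on $B$ there is no analogous contraction along the $Y$-direction of $\Psi_t$, so the trajectory-separation estimate must be obtained by Bellman/Gronwall, which yields the expansive bound $|\Psi_s v-\Psi_s\bar v|\le K e^{\beta|s|}|v-\bar v|$. Together with the smallness condition (2.3) on $|f|_{\mathrm{Lip}}$, this gives a local Lipschitz estimate of the form $|g(v)-g(\bar v)|\le C' e^{\beta T}|v-\bar v|$ on any window $[-T,T]$; combined with the uniform bound $|g(v)-g(\bar v)|\le 2|g|_\infty$ coming from Theorem 2.2 and optimized in $T$ as a function of $|v-\bar v|$, this produces the Hölder bound $|G(u)-G(\bar u)|\le p_2|u-\bar u|^q$ with $q\in(0,1)$ determined by $\beta,\alpha$ and the dichotomy constant $k$.

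\textbf{Main obstacle.} The delicate step is the Lipschitz bound for $H$. The key is to verify that, under only a partial dichotomy (no hyperbolicity of $B$) and the one-sided projection hypothesis on the initial difference, the integrand arising in $h(u)-h(\bar u)$ genuinely fits the hypotheses of the dichotomy integral inequality — in particular that the sign of the exponents in both $G_A(-s)$ and in the trajectory-separation estimate line up to produce decay on each half-line, rather than growth. Once this is arranged, the inequality does the rest and delivers a clean linear bound, which is the sharp contrast with the Bellman-based argument used for $G$.
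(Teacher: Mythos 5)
Your proposal follows essentially the same route as the paper: for $H$, the Green-kernel representation of $h$ split at $s=0$ combined with the trajectory-separation estimate obtained from the dichotomy integral inequality (Lemma 3.9), and for $G$, an expansive Gronwall-type bound on the trajectory separation truncated at a time of order $\log\left(1/|v-\bar v|\right)$ and balanced against the uniform bound coming from $|f|_{\infty}$. Two minor differences worth noting: the paper's trajectory estimate is not purely decaying as you claim (since $B$ carries no dichotomy, the $Y$-component contributes a non-decaying term $M_{B}|P_{+}(\eta-\bar{\eta})|$, which is nevertheless still integrable against the exponentially decaying Green kernel), and the paper makes the self-referential estimate for $g$ rigorous via the successive approximations $g_{m}$ with an inductive H\"older hypothesis rather than by a direct truncate-and-optimize argument.
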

}

\begin{myyyrem}
  In general, it is well known that the linearising maps obtained by $C^{0}$ linearization are H\"{o}lder continuous, i.e., $C^{0,\alpha} (0<\alpha<1)$.
 Our result is the first one to observe that the linearising map $H$ is Lipschitzian, but
   the inverse $G=H^{-1}$ is merely H\"{o}lder continuous. The method is based on two important dichotomy inequalities and theory of stable (unstable) manifold.
Furthermore,  $G$ cannot be improved to be Lipschitzian due to the right side integral diverges in the proof, see Remark \ref{g-holder} for more detail.
\end{myyyrem}

\noindent
  For the sake of comparison, we also present a result on the H\"older continuity of the both linearising maps based on the Bellman inequalities.
{\color{blue}
\begin{myythm}\label{Thm-Regulaity2}
Assume that all the conditions of Theorem \ref{Thm-Linearization} hold.
  Then both the transformation $H$ and its inverse $G$  are both H\"{o}lder continuous,
i.e., there exist positive constants $\tilde{p}_{1}, p_{2}>0$ and $0<q, \tilde{q}<1$  such that
for $u=(x,y)^T$ and $\bar{u}=(\bar{x},\bar{y})^T$
\begin{equation*}
  \begin{cases}
  |H(u)-H(\bar{u})|\leq \tilde{p}_{1}\cdot|u-\bar{u}|^{\tilde{q}}, \\
  |G(u)-G(\bar{u})|\leq p_{2}\cdot|u-\bar{u}|^{q}.
  \end{cases}
\end{equation*}
\end{myythm}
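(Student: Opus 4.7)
The plan is to repeat the strategy that will be used for Theorem \ref{Thm-Regulaity}, but, as announced in Subsection 1.2, to replace the dichotomy integral inequality (Lemma \ref{Lemma-9}) by the classical Bellman inequality (Lemma \ref{Lemma-10}). From the construction of the conjugacy in the proof of Theorem \ref{Thm-Linearization}, the linearising map has the form $H(u)=u+h(u)$ with $h\in\mathbb{BC}(X)$, and its inverse $G(v)=v+g(v)$ with $g\in\mathbb{BC}(X)$; moreover $h$ and $g$ admit integral representations over $\mathbb{R}$ of the Green kernel $G_A$ against $f$ composed either with the mild flow of (\ref{semilinear-eq}) (for $h$) or with the linear flow of (\ref{linear-eq}) (for $g$).

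For $H$, let $\Psi_s(u)$ denote the mild solution of (\ref{semilinear-eq}) with initial datum $u$. Subtracting and using the Lipschitz bound on $f$ gives
\begin{equation*}
|h(u)-h(\bar u)| \;\le\; |f|_{Lip}\int_{-\infty}^{\infty}|G_A(-s)|\cdot|\Psi_s(u)-\Psi_s(\bar u)|\,ds.
\end{equation*}
The variation-of-constants formula for $\Psi_s$ combined with Bellman's inequality produces a flow-divergence estimate $|\Psi_s(u)-\Psi_s(\bar u)|\le Ce^{M|s|}|u-\bar u|$ for positive constants $C,M$ depending on $k$, $\alpha$, $|f|_{Lip}$ and the norm of $e^{Bs}$ on compact windows. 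Coupled with the Green-kernel decay $|G_A(-s)|\le ke^{-\alpha|s|}$, the integrand is bounded by $Ck|f|_{Lip}e^{(M-\alpha)|s|}|u-\bar u|$, which is not integrable over $\mathbb{R}$. We therefore split at a threshold $T>0$: on $\{|s|>T\}$ we use the trivial bound $|f(\Psi_s(u))-f(\Psi_s(\bar u))|\le 2|f|_\infty$ and pick up a tail of order $e^{-\alpha T}$; on $\{|s|\le T\}$ we apply the Bellman estimate and pick up a main term of order $e^{MT}|u-\bar u|$. Optimising by the choice $T=\frac{1}{\alpha+M}\log\bigl(1/|u-\bar u|\bigr)$ balances the two contributions and yields $|h(u)-h(\bar u)|\le C_1|u-\bar u|^{\tilde q}$ with Hölder exponent $\tilde q=\alpha/(\alpha+M)\in(0,1)$ in the small-distance regime; in the large-distance regime the bound follows from the uniform bound on $h$. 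Together with the identity contribution $|u-\bar u|$ in $H=I+h$, this delivers the claimed Hölder estimate for $H$.

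For $G$, the same argument applies verbatim with the mild flow $\Psi_s$ replaced by the linear flow $\Phi_s(v)=(e^{As}v_{10},e^{Bs}v_{20})$. The difference $|\Phi_s(v)-\Phi_s(\bar v)|$ is controlled directly by the operator norms $|e^{As}|$ and $|e^{Bs}|$ without invoking Bellman; but since $e^{Bs}$ need not decay, the integrand defining $g(v)-g(\bar v)$ is again not globally integrable in $s$, and the identical cutoff-and-optimise argument produces the Hölder exponent $q\in(0,1)$ and the bound for $G$.

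The principal obstacle, and the precise reason for the weaker Hölder (rather than Lipschitz) conclusion, is that the exponential growth factor $e^{M|s|}$ coming from Bellman's inequality (respectively from the uncontrolled semigroup $e^{Bs}$) overpowers the decay $e^{-\alpha|s|}$ of $G_A(-s)$, forcing the cutoff optimisation at the price of a fractional exponent. Substituting the sharper dichotomy integral inequality of Lemma \ref{Lemma-9}, which replaces $e^{M|s|}$ by an exponentially decaying factor, is precisely what removes this defect and recovers Lipschitz regularity for $H$ in Theorem \ref{Thm-Regulaity}.
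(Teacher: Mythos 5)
Your treatment of $H$ is essentially the paper's own argument: the paper sets $\tau_1=\frac{1}{\omega_c+M_c|f|_{Lip}}\ln\frac{1}{|\xi-\bar\xi|+|\eta-\bar\eta|}$, bounds the far field by $2|f|_\infty$ against the Green-kernel decay $ke^{-\alpha s}$, bounds the near field by the Bellman estimate of Lemma \ref{Lemma-10}, and obtains the exponent $\tilde q=\alpha/(\omega_c+M_c|f|_{Lip})$; your threshold $T=\frac{1}{\alpha+M}\log(1/|u-\bar u|)$ differs only in the constant appearing in the exponent and is equally valid.

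The $G$ half has a genuine gap. You claim the argument ``applies verbatim with the mild flow replaced by the linear flow,'' as if $g$ were given by integrating $G_A(-s)$ against $f$ composed with the linear flow alone. But by Lemma \ref{Lemma-3} the representation is
\begin{equation*}
g(\xi,\eta)=\int_{\mathbb{R}}G_A(-s)\,f\bigl(e^{As}\xi+w(s),\,e^{Bs}\eta\bigr)\,ds,
\end{equation*}
where $w$ is the bounded mild solution of \eqref{nonlinear-eq-lemma3}, which depends on $(\xi,\eta)$ through a fixed point; along the flow one has $w(s)=g(e^{As}\xi,e^{Bs}\eta)$. Hence the Lipschitz bound on $f$ produces, besides $|e^{As}(\xi-\bar\xi)|$ and $|e^{Bs}(\eta-\bar\eta)|$, the term $|w(s)-\bar w(s)|$, which is precisely the quantity being estimated, evaluated along the flow. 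A one-pass cutoff-and-optimise estimate therefore does not close: the resulting integral inequality is implicit in the unknown, and (since $\omega_c>\alpha$ in general) its inhomogeneous part already diverges before any cutoff is introduced. The paper resolves this with the successive approximations $g_0\equiv 0$, $g_{m+1}=\mathcal{T}g_m$, and the induction hypothesis $|g_m(\xi,\eta)-g_m(\bar\xi,\bar\eta)|\le p(|\xi-\bar\xi|+|\eta-\bar\eta|)^q$, with $p$ chosen large and $q$ small so that the bound propagates from $g_m$ to $g_{m+1}$ after the splitting at $\tau_2=\frac{1}{\omega_c}\ln\frac{1}{|\xi-\bar\xi|+|\eta-\bar\eta|}$ (Step 1-2 of the proof of Theorem \ref{Thm-Regulaity}, to which Step 2-2 of Theorem \ref{Thm-Regulaity2} simply refers). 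You need some such device --- induction on the iterates, or an a priori H\"older ansatz fed back into the implicit inequality --- for the $G$ estimate to be rigorous.
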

}

\subsection{Local linearization}
  We present a $generalized$ version of  local linearization.
  It is well known that the classical local linearization can be achieved as long as the nonlinear term $f(x)$ satisfies:
(1) $f(0)=f'(0)=0$; (2) $f(x)$ has a small Lipschitz constant in some neighborhood of 0.
  In the present paper, we improve the second condition, that is, while $f(x)$ is not Lipschitz continuous,
we can still perform the local linearization.
  We claim this facts as follows:

  Let $f(u_{1},u_{2}):X\times Y\to X$ be continuous and $f(0,0)\equiv 0$.
  For $u_{1}, \bar{u}_{1}\in X$, and $u_{2}, \bar{u}_{2}\in Y$, assume that
\[ |f(u_{1},u_{2})-f(\bar{u}_{1}, \bar{u}_{2})|\leq L(\max\{|u_{1}|, |\bar{u}_{1}|\}, \max\{|u_{2}|, |\bar{u}_{2}|\})
  (|u_{1}-\bar{u}_{1}|+|u_{2}-\bar{u}_{2}|),\]
where $L(\cdot,\cdot): [0,+\infty)\times [0,+\infty)\rightarrow [0,+\infty)$ is continuous, nondecreasing and $L(0,0)\equiv 0$.
  In order to obtain a local linearization version, we shall first discuss the modified equation of equation (\ref{semilinear-eq}), i.e.,
\begin{equation}\label{semilinear-eq-modified}
\begin{cases}
\partial_{t}u_{1}=Au_{1}+f_{\delta}(u_{1},u_{2}), \\
\partial_{t}u_{2}=Bu_{2}, \\
u_{1}(0)=u_{10}, u_{2}(0)=u_{20},
\end{cases}
\end{equation}
where $\delta>0$ is a given positive constant.
  $f_{\delta}(u_{1},u_{2})$ is the modified nonlinearity of $f(u_{1},u_{2})$ and defined as follows:
\[ f_{\delta}(u_{1}, u_{2})=f\left(\psi\left(\frac{|u_{1}|^{2}}{\delta^{2}}\right)u_{1}, \psi\left(\frac{|u_{2}|^{2}}{\delta^{2}}\right)u_{2}\right),\]
where $\psi$ is a $C^{\infty}$ bump function, namely, $\psi(t)=1$ as $t\in[0,1]$; $0<\psi(t)<1$ as $t\in(1,2)$; $\psi(t)=0$ as $t\in [2,\infty)$ and
$\psi'(t)\leq 2$.
  Obviously, $\psi(|u_{1}|^{2}), \psi(|u_{2}|^{2})$ is a smooth bump function and
\[ \begin{split}
 \left|D_{u_{1}}\left(\psi\left(\frac{|u_{1}|^{2}}{\delta^{2}}\right)u_{1}\right)\right|
                       \leq& |u_{1}| \cdot \left|\psi'\left(\frac{|u_{1}|^{2}}{\delta^{2}}\right)\right| \cdot \frac{2|u_{1}|}{\delta^{2}}
                                                        +\psi\left(\frac{|u_{1}|^{2}}{\delta^{2}}\right) \\
                                                        \leq& 2\cdot\frac{2(\sqrt{2}\delta)^{2}}{\delta^{2}}+1=9, \\
  \left|D_{u_{2}}\left(\psi\left(\frac{|u_{2}|^{2}}{\delta^{2}}\right)u_{2}\right)\right| \leq& 9.
\end{split}\]
Hence, it is clear to see that the modified nonlinear term $f_{\delta}(u_{1}, u_{2})$ has the following properties:
\begin{description}
  \item[(1)] $f_{\delta}(u_{1}, u_{2})| \overline{\mathcal{B}}(0,\delta) \equiv f(u_{1}, u_{2})$;
            $f_{\delta}(u_{1}, u_{2})|\{u_{1}\in X, u_2\in Y| |u_{i}|\geq \sqrt{2}\delta, i=1,2\}\equiv 0$,
    where $\overline{\mathcal{B}}(0,\delta)$ is the closure of $\mathcal{B}(0,\delta)$ and $\mathcal{B}(0,\delta)$ is a spherical neighborhood.
  \item[(2)] $|f_{\delta}(u_{1}, u_{2})-f_{\delta}(\bar{u}_{1}, \bar{u}_{2})|\leq 9L(\sqrt{2}\delta,\sqrt{2}\delta)
    (|u_{1}-\bar{u}_{1}|+|u_{2}-\bar{u}_{2}|)$ for $u_{1}, \bar{u}_{1}\in X$ and $u_2, \bar{u}_2 \in Y$.
  \item[(3)] $|f_{\delta}(u_{1}, u_{2})|\leq 2\sqrt{2}\delta\cdot L(\sqrt{2}\delta,\sqrt{2}\delta)$.
\end{description}
Now, we are in a position to present the {\bf local linearization theorem}.

\begin{myythm}\label{Local-theorem}
 Let $A, B$ be the generators of $C_{0}$-groups $e^{At}, e^{Bt}$ on Banach spaces $X, Y$, respectively.
 Assume that $e^{At}$ admits an exponential dichotomy.
 Further, for a given constant $\delta>0$, if the nonlinearity $f_{\delta}$ satisfies properties {\bf (1)-(3)} and such that
\[ 36k\alpha^{-1}\cdot L(\sqrt{2}\delta,\sqrt{2}\delta) <1.\]
Then equation (\ref{semilinear-eq-modified}) is topologically conjugated to its linear parts in $\overline{\mathcal{B}}(0,\delta)$.
\end{myythm}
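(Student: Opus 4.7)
The plan is to derive Theorem \ref{Local-theorem} as an immediate corollary of the global linearization result, Theorem \ref{Thm-Linearization}, applied to the truncated system (\ref{semilinear-eq-modified}). The $C^{\infty}$ bump construction of $f_{\delta}$ has been engineered precisely so that it agrees with $f$ inside $\overline{\mathcal{B}}(0,\delta)$ while behaving as a globally small, bounded, Lipschitz perturbation on all of $X\times Y$.

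First I would verify that $f_{\delta}$ satisfies every hypothesis of Theorem \ref{Thm-Linearization}. Property (3) yields global boundedness $|f_{\delta}|_{\infty}\le 2\sqrt{2}\,\delta\, L(\sqrt{2}\delta,\sqrt{2}\delta)<\infty$. Property (2) yields global Lipschitz continuity with constant $|f_{\delta}|_{\mathrm{Lip}}\le 9\, L(\sqrt{2}\delta,\sqrt{2}\delta)$ with respect to the product norm on $X\times Y$. The smallness condition (\ref{third-linear-condition}) of Theorem \ref{Thm-Linearization} then reads $4k\alpha^{-1}|f_{\delta}|_{\mathrm{Lip}}\le 36\,k\alpha^{-1}L(\sqrt{2}\delta,\sqrt{2}\delta)<1$, which is exactly the standing assumption. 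Since $e^{At}$ already admits an exponential dichotomy and no assumption on $B$ is needed, Theorem \ref{Thm-Linearization} applies to the truncated system and delivers a homeomorphism $H:X\times Y\to X\times Y$ conjugating the mild flow of (\ref{semilinear-eq-modified}) with the flow of (\ref{linear-eq}), with $H-\mathrm{id}$ and $G-\mathrm{id}$ bounded.

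Finally, because $f_{\delta}\equiv f$ on $\overline{\mathcal{B}}(0,\delta)$ by property (1), any mild trajectory of (\ref{semilinear-eq}) that remains inside this ball is automatically a mild trajectory of (\ref{semilinear-eq-modified}) and vice versa; restricting $H$ to a neighborhood of the origin whose forward and backward orbits stay in $\overline{\mathcal{B}}(0,\delta)$ then delivers the desired local conjugacy. The only item to watch is bookkeeping: one must confirm that the constant $9$ in property (2) is the Lipschitz constant in the same product norm used to state Theorem \ref{Thm-Linearization}, so that the arithmetic $4k\alpha^{-1}\cdot 9L=36k\alpha^{-1}L$ lines up, and one must interpret ``topologically conjugated in $\overline{\mathcal{B}}(0,\delta)$'' as a conjugacy of local semiflows valid as long as the relevant orbits remain in the ball. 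Beyond this, no new estimate independent of Theorem \ref{Thm-Linearization} is required.
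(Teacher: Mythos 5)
Your proposal is correct and matches the paper's own argument: the paper likewise verifies that $|f_{\delta}|_{\mathrm{Lip}}=9L(\sqrt{2}\delta,\sqrt{2}\delta)$ and $|f_{\delta}|_{\infty}=2\sqrt{2}\delta\cdot L(\sqrt{2}\delta,\sqrt{2}\delta)$ so that condition (\ref{third-linear-condition}) becomes $36k\alpha^{-1}L(\sqrt{2}\delta,\sqrt{2}\delta)<1$, and then invokes Theorem \ref{Thm-Linearization} for the modified equation, restricting to $\overline{\mathcal{B}}(0,\delta)$ where $f_{\delta}\equiv f$. Your additional remarks about the product norm and the interpretation of the local conjugacy are sensible bookkeeping that the paper leaves implicit.
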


\subsection{Two important dichotomy integral inequalities}
    Next, we present a generalized version of dichotomy integral inequalities, which will play an important role in our main proofs.
    It will help us to prove Lipschitz continuity for the linearising maps.

\newtheorem{mylem-l}{\bf{Lemma}\rm}[section]
\begin{mylem-l}\label{Dichotomy-inequality-1st}
  Assume that the function $T(t): [0,s]\rightarrow [0,\infty)$ is continuous and bounded for any $s\in (0,\infty]$.
  If there exist non-negative constants  $\alpha$ and $a_{i}, i=1,...,4$ such that  $a_{3}+a_{4}< \alpha$,
  then for any $t\in[0,s]$, the inequality
\begin{equation}\label{ineq1}
  T(t)\leq a_{1}+ a_{2}e^{-\alpha t}+ a_{3}\int_{0}^{t}e^{-\alpha(t-\tau)}T(\tau)d\tau+a_{4}\int_{t}^{s}e^{\alpha(t-\tau)}T(\tau)d\tau
\end{equation}
implies
$T(t)\leq (1-\varpi)^{-1}(a_{1}+a_{2}e^{-\alpha_{1}t}),$
where
\[\begin{split}
\varpi:=& \sup\limits_{t\in [0,s]}
\left(a_{3} \int_{0}^{t} e^{-\alpha(t-\tau)}d\tau+ a_{4} \int_{t}^{s}e^{\alpha(t-\tau)}d\tau\right)=(a_{3}+a_{4})/\alpha<1, \\
\alpha_{1}:=& \alpha-a_3 \cdot (1-\varpi)^{-1}.
\end{split}\]
\end{mylem-l}

\begin{proof}
   For any $t\geq 0$, suppose that the function
   \begin{equation}\label{(1)}
     \hat{T}(t)=T(t)-\hat{a}_{1}
   \end{equation}
satisfies the following inequality
\begin{equation}\label{(2)}
  \hat{T}(t)\leq \hat{a}_{2}e^{-\alpha t}+\hat{a}_{3} \int_{0}^{t}e^{-\alpha(t-\tau)}\hat{T}(\tau)d\tau
                      +\hat{a}_{4}\int_{t}^{s}e^{\alpha(t-\tau)}\hat{T}(\tau)d\tau,
\end{equation}
where $\hat{a}_{i}, i=1,...,4$ are undetermined constants. From \eqref{(1)} and \eqref{(2)}, we obtain
\[\begin{split}
T(t)-\hat{a}_{1} \leq & \hat{a}_{2}e^{-\alpha t}+\hat{a}_{3} \int_{0}^{t}e^{-\alpha(t-\tau)}T(\tau)d\tau
                      +\hat{a}_{4}\int_{t}^{s}e^{\alpha(t-\tau)}T(\tau)d\tau \\
                      &- \hat{a}_{1}\hat{a}_{3}\int_{0}^{t} e^{-\alpha(t-\tau)}d\tau- \hat{a}_{1}\hat{a}_{4} \int_{t}^{s}e^{\alpha(t-\tau)}d\tau,
\end{split}\]
that is
\begin{equation}\label{(3)}
  T(t)\leq \hat{a}_{1}(1-(\hat{a}_{3}+\hat{a}_{4})/\alpha)
                   + \hat{a}_{2}e^{-\alpha t}+\hat{a}_{3} \int_{0}^{t}e^{-\alpha(t-\tau)}T(\tau)d\tau
                      +\hat{a}_{4}\int_{t}^{s}e^{\alpha(t-\tau)}T(\tau)d\tau.
\end{equation}
Comparing \eqref{(3)} with \eqref{ineq1}, we see that
\[\begin{cases}
 a_{1}=\hat{a}_{1}(1-(\hat{a}_{3}+\hat{a}_{4})/\alpha),\\
 a_{2}=\hat{a}_{2}, a_{3}=\hat{a}_{3}, a_{4}=\hat{a}_{4}.
\end{cases}\]
Thus it follows from \eqref{(2)} and dichotomy inequality \cite{Pinto2} that
\[ \hat{T}(t)\leq \frac{a_{2}}{1-\varpi}e^{-(\alpha-\frac{a_{3}}{1-\varpi})t}= \frac{a_{2}}{1-\varpi}e^{-\alpha_{1} t}.\]
Therefore,
\[ T(t)\leq  (1-\varpi)^{-1}(a_{1}+a_{2}e^{-\alpha_{1}t}).\]
This completes the proof.
\end{proof}

\newtheorem{myyrem}{\bf{Remark}\rm}[section]
\begin{myyrem}\label{remmark-dichotomy-ieq}
   If $a_{1}\equiv 0$, then $T(t)\leq (1-\varpi)^{-1} a_{2}e^{-\alpha_{1}t}$ for all $t\geq 0$.
   If $a_{1}:=a_{1}(t)=\bar{a}_{1}\cdot e^{-\bar{\alpha} t} (\bar{a}_{1}, \bar{\alpha}\geq 0)$, then
    $T(t)\leq (1-\varpi)^{-1} (\bar{a}_{1}+a_{2}) e^{-\max\{\bar{\alpha}, \alpha_{1} \}t}$.
   Clearly, our results are a generalized version of classical dichotomy inequality in \cite{Pinto2}.
\end{myyrem}

    Next, we give a dichotomy inequality of negative time, the conditions and proof are the same as
in Lemma \ref{Dichotomy-inequality-1st}.
\begin{mylem-l}\label{Dichotomy-inequality-2ed}
  Assume that the function $T(t): [s,0]\rightarrow [0,\infty)$ be continuous and bounded for any $s\in [-\infty,0)$.
  If there exist non-negative constants  $\alpha$ and $a_{i}, (i=1,...,4)$ such that $a_{3}+a_{4}< \alpha$,
  then for any $t\in[s,0]$, the inequality
\begin{equation*}
  T(t)\leq a_{1}+ a_{2}e^{\alpha t}+ a_{3}\int_{t}^{0}e^{\alpha(t-\tau)}T(\tau)d\tau+a_{4}\int_{s}^{t}e^{-\alpha(t-\tau)}T(\tau)d\tau
\end{equation*}
implies
$T(t)\leq (1-\varpi)^{-1} (a_{1}+a_{2}e^{\alpha_{1}t}).$
\end{mylem-l}

\section{Preliminary results}
\subsection{Non-trivial bounded mild solution}

We start with a fundamental lemma which is a key for the other lemmas. Idea follows from \cite{Pruss2}.
\newtheorem{mylemm}{\bf{Lemma}\rm}[section]
\begin{mylemm}\label{bounded-Zero-solution}
 Suppose that $e^{At}$ admits an exponential dichotomy, then the Cauchy problem of the evolution equation $u'=Au$ has no non-trivial bounded mild solutions.
\end{mylemm}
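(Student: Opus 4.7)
The plan is to show that any mild solution $u:\mathbb{R}\to X$ of $u'=Au$ which is bounded on $\mathbb{R}$ must be identically zero. Since $A$ generates a $C_0$-group, every mild solution has the form $u(t)=e^{At}u(0)$ on all of $\mathbb{R}$, and (S1) together with the group property yields $P_\pm u(t)=e^{At}P_\pm u(0)$ for all $t\in\mathbb{R}$. Set $M:=\sup_{t\in\mathbb{R}}|u(t)|<\infty$. It then suffices to prove $P_+u(0)=0$ and $P_-u(0)=0$ separately, after which $u(0)=0$ and hence $u\equiv 0$.

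First I would kill the unstable part. For any $s\ge 0$ the group inverse gives $P_-u(0)=e^{-As}P_-u(s)$, and applying (S4) with $t=-s\le 0$ and $x=u(s)$ produces
$|P_-u(0)|\le k\,e^{-\alpha s}|P_-u(s)|\le k\,\|P_-\|\,M\,e^{-\alpha s}$,
so letting $s\to\infty$ forces $P_-u(0)=0$. Symmetrically, for the stable part I would evaluate at large negative time: for $s\ge 0$, $P_+u(0)=e^{As}P_+u(-s)$, and (S2) yields
$|P_+u(0)|\le k\,e^{-\alpha s}|P_+u(-s)|\le k\,\|P_+\|\,M\,e^{-\alpha s}$,
so again $s\to\infty$ gives $P_+u(0)=0$. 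Adding the two identities produces $u(0)=0$ and hence, by the group representation, $u(t)\equiv 0$.

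The argument is essentially mechanical once the dichotomy is in place, so the main subtlety is just the minor technical point of verifying that (S1) extends to negative $t$ via the group structure (so that $P_\pm u(t)=e^{At}P_\pm u(0)$ holds for every $t\in\mathbb{R}$, not merely $t\ge 0$), and that (S2) and (S4) can legitimately be invoked on the vectors $P_\pm u(\cdot)$, which automatically lie in $R(P_\pm)$. I anticipate no genuine obstacle here; this lemma is the standard consequence of exponential dichotomy and will feed directly into the uniqueness of bounded mild solutions needed to construct the conjugacy $H$ in the sequel.
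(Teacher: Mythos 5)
Your proof is correct and follows essentially the same route as the paper: decompose the bounded solution into $P_+u$ and $P_-u$, apply (S2) forward in time and (S4) backward in time, and let the time parameter tend to infinity to force both projections to vanish. Your write-up is in fact a bit cleaner, since the paper's intermediate manipulation ``$|u(s-t)|=|n\cdot u(r)|$'' is spurious --- boundedness of $u$ already supplies the needed uniform bound, which is exactly how you use $M=\sup_{t}|u(t)|$.
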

\begin{proof}
  Suppose that $u(t;0,u_{0})=e^{At}u_{0}$ is a bounded mild solution of $u'=Au$ with the initial condition $u(0)=u_{0}\in X$.
  We want to show that
   $u(t)\triangleq u(t;0,u_{0})\equiv 0$. Firstly, for $t\geq 0$,  $u(t+s)=e^{At}u(s),$ for all $s\in\mathbb{R}$. In particular, $u(s)=e^{At}u(s-t)$.
Hence,
\[ |P_{+}u(s)| \leq |e^{At}P_{+}|\cdot|P_{+}u(s-t)|\leq ke^{-\alpha t}\cdot |P_{+}u(s-t)|. \]
Let $s-t=n \cdot r$, where $n$ is a integer and $r>0$ is a constant.   Since $\|u\|<+\infty$, then
\[ |u(s-t)|=|n\cdot u(r)|\leq |n|\cdot |u(r)|, \]
which implies that $\sup\limits_{t\geq0, s\in\mathbb{R}}|u(s-t)|< +\infty$. Therefore,
\[ |P_{+}u(s)| \leq k e^{-\alpha t} \cdot \|u\| \rightarrow 0 \quad \mathrm{as }\quad t \rightarrow +\infty ,\]
that is, $P_{+}u(s)\equiv 0$.
Similarly,
\[|P_{-}u(s)|\leq |e^{-At}P_{-}|\cdot|P_{-}u(s+t)|\leq ke^{-\alpha t}\cdot \|u\| \rightarrow 0 \quad\mathrm{as} \quad t \rightarrow +\infty , \]
which implies that  $P_{-}u(s)\equiv 0$. Hence, $u(s)\equiv 0$, for all $s\in \mathbb{R}$, which implies that the bounded mild solution
$u(t;0,u_{0})$ of $u'=Au$  is always equal to 0.
\end{proof}

\subsection{Constructing the conjugacy  }

To construct the conjugacy in Theorem \ref{Thm-Linearization}, we divide the proof of Theorem \ref{Thm-Linearization} into several preliminary results as follows.

\noindent
For simplicity, let
  $\left( \begin{array}{c}
    U_{1}(t,0,u_{10},u_{20})\\
    U_{2}(t,0,u_{10},u_{20})\\
  \end{array} \right)$
be the $mild$ $solution$ of (\ref{semilinear-eq})
and
 $\left( \begin{array}{c}
    V_{1}(t,0,v_{10},v_{20})\\
    V_{2}(t,0,v_{10},v_{20})\\
 \end{array} \right)$
be the $mild$ $solution$ of (\ref{linear-eq}), where
\[\begin{split}
 U_{1}(t,0,u_{10},u_{20})=&e^{At}u_{10}+\int_{0}^{t} e^{A(t-s)}\cdot f( U_{1}(s,0,u_{10},u_{20}), U_{2}(s,0,u_{10},u_{20}))ds, \\
 U_{2}(t,0,u_{10},u_{20})=&e^{Bt}u_{20},\quad
 V_{1}(t,0,v_{10},v_{20})=e^{At}v_{10}, \quad V_{2}(t,0,v_{10},v_{20})=e^{Bt}v_{20}.
 \end{split}\]
In what follows, we always suppose that all the conditions of Theorem \ref{Thm-Linearization} are satisfied.

\begin{mylemm}\label{Lemma-2}
For each fixed $(\xi,\eta)\in X\times Y$, the linear inhomogeneous evolution equation
\begin{equation}\label{inhomogeneous-eq}
\begin{cases}
z'=Az-f(U_{1}(t,0,\xi,\eta), U_{2}(t,0,\xi,\eta)),\\
z(0)=h(\xi,\eta)\in X,
\end{cases}
\end{equation}
has a unique bounded mild solution.
\end{mylemm}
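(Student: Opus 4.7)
The plan is to construct the bounded mild solution explicitly via the Green's kernel $G_A(\cdot)$ of the exponential dichotomy, and to read off $h(\xi,\eta)$ as its value at $t=0$. Since $f$ is bounded by $|f|_\infty$, the forcing term $g(t):=-f(U_1(t,0,\xi,\eta),U_2(t,0,\xi,\eta))$ is continuous and uniformly bounded on $\mathbb R$. I would therefore define
\[
z(t) := -\int_{-\infty}^{+\infty} G_A(t-s)\, f\bigl(U_1(s,0,\xi,\eta),U_2(s,0,\xi,\eta)\bigr)\,ds = \int_{-\infty}^{+\infty} G_A(t-s)\,g(s)\,ds,
\]
with the corresponding choice
\[
h(\xi,\eta) := z(0) = -\int_{-\infty}^{+\infty} G_A(-s)\, f\bigl(U_1(s,0,\xi,\eta),U_2(s,0,\xi,\eta)\bigr)\,ds.
\]
The first step is to verify that this integral converges absolutely and defines a bounded continuous function. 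Using the dichotomy estimates (S2) and (S4) in Definition \ref{exponential-dichotomy}, splitting the domain of integration at $s=t$, I expect the estimate
\[
|z(t)| \le k|f|_\infty \int_0^{\infty} e^{-\alpha \tau} d\tau + k|f|_\infty \int_{-\infty}^0 e^{\alpha \tau} d\tau = \frac{2k}{\alpha}|f|_\infty,
\]
so $z \in \mathbb{BC}(\mathbb R, X)$.

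The second step is to check that $z(t)$ is indeed a mild solution of \eqref{inhomogeneous-eq}. For this I would split the integral into the projections $P_+$ and $P_-$ and use the semigroup/group properties together with the commutation (S1); differentiating under the integral in the mild sense gives the variation-of-constants identity
\[
z(t) = e^{A(t-t_0)}z(t_0) + \int_{t_0}^{t} e^{A(t-s)}g(s)\,ds
\]
for every $t_0 \le t$, which is precisely the mild-solution property. In particular, taking $t_0=0$ shows that $z$ starts at $h(\xi,\eta)$ and solves the Cauchy problem in the mild sense.

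For uniqueness, suppose $z_1,z_2$ are two bounded mild solutions of \eqref{inhomogeneous-eq}. Their difference $w:=z_1-z_2$ is a bounded mild solution of $w'=Aw$, so Lemma \ref{bounded-Zero-solution} forces $w\equiv 0$. The main point to be careful about is the interchange of the $G_A$-integral with the semigroup action when checking the variation-of-constants formula: the fact that $e^{At}P_-$ extends to a $C_0$-group on $R(P_-)$ (property (S3)) is exactly what allows the $(-\infty,t)$ piece to make sense, so this is where the dichotomy hypothesis is essential. Everything else is routine bookkeeping with the bounds already recorded.
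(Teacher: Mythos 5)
Your proposal is correct and follows essentially the same route as the paper: the bounded solution is constructed as the Green's-kernel convolution $z=-(G_A*f)$, bounded by $2k\alpha^{-1}|f|_\infty$ using (S2) and (S4), and uniqueness is reduced via the difference of two bounded mild solutions to Lemma \ref{bounded-Zero-solution}. Your extra care in verifying the variation-of-constants identity is a detail the paper glosses over, but it does not change the argument.
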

\begin{proof}
For any fixed $(\xi,\eta)$, observe that a solution of \eqref{inhomogeneous-eq} is given by the convolution
\[\begin{split}
 z(t):=(G_{A}*f)(t)=&-\int_{\mathbb{R}}G_{A}(s)f(U_{1}(t-s,0,\xi,\eta), U_{2}(t-s,0,\xi,\eta))ds \\
 =&-\int_{\mathbb{R}}G_{A}(t-s)f(U_{1}(s,0,\xi,\eta), U_{2}(s,0,\xi,\eta))ds.
 \end{split}\]
We shall show that $z(t)$ is the unique bounded mild solution of equation \eqref{inhomogeneous-eq}.
Since $f$ is bounded, we have
\[ \|z\|\leq 2k\alpha^{-1}\cdot |f|_{\infty}< +\infty. \]
This shows that $z(t)$ is bounded. From Lemma \ref{bounded-Zero-solution}, we know, for its linear homogeneous part $z'=Az$,
that the bounded mild solution is a zero solution.
Therefore,  $z(t)$ is a unique bounded mild solution.
\\
In particular, if we take $t=0$, then
\[ h(\xi,\eta)=z(0)= -\int_{\mathbb{R}}G_{A}(-s)f(U_{1}(s,0,\xi,\eta), U_{2}(s,0,\xi,\eta))ds.\]
This mild solution $h$ is also bounded with $\|h(\xi,\eta)\|\leq 2k\alpha^{-1}\cdot |f|_{\infty}$.
\end{proof}

\begin{mylemm}\label{Lemma-3}
For each fixed $(\xi,\eta)\in X\times Y$, the semilinear evolution equation
 \begin{equation}\label{nonlinear-eq-lemma3}
 \begin{cases}
 w'=Aw+f(V_{1}(t,0,\xi,\eta)+w, V_{2}(t,0,\xi,\eta)),\\
 w(0)=g(\xi,\eta)\in X,
 \end{cases}
 \end{equation}
has a unique bounded mild solution.
\end{mylemm}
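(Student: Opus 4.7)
My plan is to mirror the strategy of Lemma \ref{Lemma-2}, but since the right-hand side now depends on the unknown $w$, I will set up a Banach fixed point problem on $\mathbb{BC}(\mathbb{R},X)$ rather than just writing the solution down explicitly. Concretely, for fixed $(\xi,\eta)\in X\times Y$, I define the operator $\mathcal{T}:\mathbb{BC}(\mathbb{R},X)\to\mathbb{BC}(\mathbb{R},X)$ by
\[
 (\mathcal{T}w)(t):=-\int_{\mathbb{R}} G_{A}(t-s)\,f\bigl(V_{1}(s,0,\xi,\eta)+w(s),\,V_{2}(s,0,\xi,\eta)\bigr)\,ds,
\]
which is the natural candidate: any bounded mild solution of \eqref{nonlinear-eq-lemma3} must coincide with its dichotomy-Green convolution, exactly as in the proof of Lemma \ref{Lemma-2}.

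The first step is well-definedness and self-mapping. Using the dichotomy estimates \textbf{(S2)} and \textbf{(S4)} together with the boundedness $|f|_{\infty}<\infty$, a direct computation gives
\[
 \|\mathcal{T}w\|\le k|f|_{\infty}\left(\int_{0}^{\infty}e^{-\alpha\tau}d\tau+\int_{-\infty}^{0}e^{\alpha\tau}d\tau\right)=2k\alpha^{-1}|f|_{\infty}<\infty,
\]
so $\mathcal{T}$ sends $\mathbb{BC}(\mathbb{R},X)$ into itself. The second step is contraction. Using the Lipschitz bound $|f|_{\mathrm{Lip}}$ in the first variable and the same split of the integral, I get
\[
 \|\mathcal{T}w-\mathcal{T}\bar{w}\|\le 2k\alpha^{-1}|f|_{\mathrm{Lip}}\,\|w-\bar{w}\|,
\]
and the smallness hypothesis \eqref{third-linear-condition}, namely $4k\alpha^{-1}|f|_{\mathrm{Lip}}<1$, ensures the contraction constant is strictly less than $1/2$. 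Hence the Banach fixed point theorem yields a unique fixed point $w^{*}\in\mathbb{BC}(\mathbb{R},X)$, and I define $g(\xi,\eta):=w^{*}(0)$.

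To finish, I must check that the fixed point really is a mild solution of \eqref{nonlinear-eq-lemma3} and that uniqueness of the bounded mild solution holds in full (not just among fixed points of $\mathcal{T}$). The first part is a standard semigroup computation: applying the variation-of-constants identity between any two times $t_{0}\le t$ to $w^{*}$ and using the commutation \textbf{(S1)} together with the decay of $e^{At}P_{\pm}$ yields
\[
 w^{*}(t)=e^{A(t-t_{0})}w^{*}(t_{0})+\int_{t_{0}}^{t}e^{A(t-s)}f\bigl(V_{1}(s,0,\xi,\eta)+w^{*}(s),V_{2}(s,0,\xi,\eta)\bigr)\,ds,
\]
which is the mild-solution formulation. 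For the second part, suppose $\tilde w$ is any bounded mild solution; projecting its variation-of-constants formula onto $R(P_{+})$ and letting $t_{0}\to-\infty$, and onto $R(P_{-})$ letting $t_{0}\to+\infty$, the boundary terms $e^{A(t-t_{0})}P_{\pm}\tilde w(t_{0})$ vanish by \textbf{(S2)}-\textbf{(S4)} and $\|\tilde w\|<\infty$. This forces $\tilde w=\mathcal{T}\tilde w$, so $\tilde w=w^{*}$ by the uniqueness of the Banach fixed point.

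The main technical hurdle I expect is precisely the last step: bridging "fixed point of $\mathcal{T}$" and "mild solution of the Cauchy problem" rigorously, because one has to ensure that the convolution against $G_{A}$ actually produces a function differentiable in the semigroup sense and satisfies the initial condition $w^{*}(0)=g(\xi,\eta)$. The calculations are routine once \textbf{(S1)}-\textbf{(S4)} and the dominated convergence theorem are invoked, but one must be careful that all interchanges of limits with integrals are justified by the uniform-in-$s$ bound $|f(\cdot)|\le|f|_{\infty}$, which allows the boundary contribution at $\pm\infty$ to vanish and yields the claimed uniqueness.
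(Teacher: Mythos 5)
Your proposal is correct in substance and follows essentially the same route as the paper: a Banach fixed point argument for the Green-kernel convolution operator on $\mathbb{BC}(\mathbb{R},X)$ (self-map via $\|\mathcal{T}w\|\leq 2k\alpha^{-1}|f|_{\infty}$, contraction constant $2k\alpha^{-1}|f|_{Lip}<\tfrac12$ from \eqref{third-linear-condition}), followed by a separate uniqueness argument showing that \emph{any} bounded mild solution must satisfy the fixed-point equation; the paper phrases that last step by isolating the homogeneous part $e^{At}(w_{0}-w_{0}^{+})$ and invoking Lemma \ref{bounded-Zero-solution}, while you kill the boundary terms $e^{A(t-t_{0})}P_{\pm}\tilde w(t_{0})$ directly, which is the same mechanism. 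One slip to fix: your operator carries a spurious minus sign. In Lemma \ref{Lemma-2} the forcing enters as $-f$, so the bounded solution is $-\int_{\mathbb{R}}G_{A}(t-s)f\,ds$; in \eqref{nonlinear-eq-lemma3} the nonlinearity enters with a plus sign, so the correct operator is $(\mathcal{T}w)(t)=+\int_{\mathbb{R}}G_{A}(t-s)\,f\bigl(V_{1}(s,0,\xi,\eta)+w(s),V_{2}(s,0,\xi,\eta)\bigr)\,ds$; as written, your fixed point solves $w'=Aw-f(\cdot)$ rather than the stated equation. All estimates are unaffected by the sign, so the argument goes through verbatim once corrected.
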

\begin{proof}
Note that
$\mathbb{BC}(\mathbb{R},X):=\{w(t)\in X| w(t)\; \mathrm{is}\; \mathrm{continuous}\; \mathrm{and}\; \sup\limits_{t\in \mathbb{R}} |w(t)| < \infty \}$,
and define a map $\mathcal{T}$ as follows
\[ (\mathcal{T}w)(t):=\int_{\mathbb{R}}G_{A}(t-s)f(V_{1}(s,0,\xi,\eta)+w(s), V_{2}(s,0,\xi,\eta))ds.\]
{\bf Step 1.} We shall show that $\mathcal{T}$ is a contraction map on $\mathbb{BC}(\mathbb{R},X)$, consequently, $\mathcal{T}$ has a unique fixed point.
In fact, it is easy to obtain that
\[ \|\mathcal{T}w\|\leq 2k\alpha^{-1}\cdot |f|_{\infty}. \]
Hence, $\mathcal{T}$ is a self-map from $\mathbb{BC}(\mathbb{R},X)$ to $\mathbb{BC}(\mathbb{R},X)$.
Note that $f$ is Lipschitz continuous with Lipschitz constant $|f|_{Lip}$, we have
\[ |(\mathcal{T}w_{1}-\mathcal{T}w_{2})(t)|\leq \int_{\mathbb{R}} ke^{\alpha|t-s|}\cdot |f|_{Lip}\cdot |(w_{1}(s)-w_{2}(s))|ds \]
It follows from (\ref{third-linear-condition}) that
$\|\mathcal{T}w_{1}-\mathcal{T}w_{2}\| \leq 2k\alpha^{-1}\cdot |f|_{Lip}\cdot \|w_{1}-w_{2}\|< \frac{1}{2}\|w_{1}-w_{2}\|$.
Thus the map $\mathcal{T}$ has a unique fixed point, namely, $w^{*}=\mathcal{T}w^{*}$, and satisfying
\[ w^{*}(t)= \int_{\mathbb{R}}G_{A}(t-s)f(V_{1}(s,0,\xi,\eta)+w^{*}(s), V_{2}(s,0,\xi,\eta))ds. \]
Clearly, $w^{*}(t)$ is a bounded mild solution of (\ref{nonlinear-eq-lemma3}).\\
{\bf Step 2.} To prove uniqueness, suppose that there is another bounded mild solution $w^{+}(t)$ of (\ref{nonlinear-eq-lemma3}). Then
 \[ \begin{split}
  w^{+}(t)=& e^{At}w_{0}+\int_{0}^{t} f(V_{1}(t,0,\xi,\eta)+w^{+}(s), V_{2}(t,0,\xi,\eta))ds\\
                 =& e^{At}\left(w_{0}-\int_{\mathbb{R}}G_{A}(-s)f(V_{1}(s,0,\xi,\eta)+w^{+}(s), V_{2}(s,0,\xi,\eta))ds\right)\\
                  &+ \int_{\mathbb{R}}G_{A}(t-s)f(V_{1}(s,0,\xi,\eta)+w^{+}(s), V_{2}(s,0,\xi,\eta))ds.
 \end{split}\]
Since $\int_{\mathbb{R}}G_{A}(-s)f(V_{1}(s,0,\xi,\eta)+w^{+}(s), V_{2}(s,0,\xi,\eta))ds$ is bounded, it is convergent, denoted by $w_{0}^{+}$.
It is easy to see that $e^{At}(w_{0}-w_{0}^{+})$ is a bounded mild solution of $w'=Aw$ with the initial condition $w_{0}-w_{0}^{+}\in X$.
From Lemma \ref{bounded-Zero-solution}, it is a zero solution. Therefore,
\[  w^{+}(t)= \int_{\mathbb{R}}G_{A}(t-s)f(V_{1}(s,0,\xi,\eta)+w^{+}(s), V_{2}(s,0,\xi,\eta))ds. \]
Calculating $w^{+}(t)-w^{*}(t)$, we have
\[ |w^{+}(t)-w^{*}(t)| \leq \int_{\mathbb{R}}ke^{\alpha|t-s|}\cdot |f|_{Lip}\cdot |w^{+}(s)-w^{*}(s)|ds.   \]
It follows from (\ref{third-linear-condition}) that $\|w^{+}-w^{*}\|\leq 2k\alpha^{-1}\cdot |f|_{Lip}\cdot \|w^{+}-w^{*}\|< \frac{1}{2}\|w^{+}-w^{*}\|.$
This implies that $w^{+}(t)\equiv w^{*}(t)$, and the bounded mild solution is unique. \\
In particular, if we take $t=0$, then $g(\xi,\eta)=w(0)$ is bounded with $\|g(\xi,\eta)\|\leq 2k\alpha^{-1}\cdot |f|_{\infty}$.
\end{proof}

\begin{mylemm}\label{Lemma-4}
Let
$\left( \begin{array}{c}
    u_{1}(t)\\
    u_{2}(t)\\
\end{array} \right)$
be any mild solution of (\ref{semilinear-eq}). Then the semilinear evolution equation
\begin{equation}\label{nonlinear-eq-in-lemma4}
z'=Az+f(u_{1}+z,u_{2})-f(u_{1},u_{2})
\end{equation}
has a unique bounded mild solution $z=0$.
\end{mylemm}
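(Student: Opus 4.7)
The plan is to follow the template of Lemma \ref{Lemma-3} (Step 2) very closely, with the role of $f(V_1+w,V_2)$ replaced by the difference $F(s):=f(u_1(s)+z(s),u_2(s))-f(u_1(s),u_2(s))$. The first observation is trivial: $z\equiv 0$ solves \eqref{nonlinear-eq-in-lemma4} and is obviously bounded, so existence is immediate. The whole content of the lemma is therefore \emph{uniqueness among bounded mild solutions}, and this is exactly where the small-Lipschitz hypothesis \eqref{third-linear-condition} together with Lemma \ref{bounded-Zero-solution} will be used.

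Concretely, given a second bounded mild solution $z^+(t)$, I would first write it via variation of constants on $[0,t]$:
\[
z^+(t)=e^{At}z^+(0)+\int_0^t e^{A(t-s)}F(s)\,ds.
\]
Then, following the same algebraic trick as in the second step of Lemma \ref{Lemma-3}, I would rewrite this as
\[
z^+(t)=e^{At}\Bigl(z^+(0)-\int_{\mathbb{R}}G_A(-s)F(s)\,ds\Bigr)+\int_{\mathbb{R}}G_A(t-s)F(s)\,ds.
\]
The key convergence check is that $|F(s)|\le |f|_{Lip}\|z^+\|$ is bounded, so the exponential decay of $G_A$ gives absolute convergence of $\int_{\mathbb{R}}G_A(\cdot)F(s)\,ds$ with a $\|\cdot\|_\infty$-bound of $2k\alpha^{-1}|f|_{Lip}\|z^+\|$. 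Consequently the $e^{At}(\cdots)$ term is a bounded mild solution of the homogeneous equation $w'=Aw$ and therefore vanishes by Lemma \ref{bounded-Zero-solution}. This leaves the clean representation
\[
z^+(t)=\int_{\mathbb{R}}G_A(t-s)\bigl[f(u_1(s)+z^+(s),u_2(s))-f(u_1(s),u_2(s))\bigr]\,ds.
\]

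From here the conclusion is a one-line sup-norm estimate: Lipschitz continuity of $f$ in the first argument and the dichotomy bound $|G_A(t-s)|\le k e^{-\alpha|t-s|}$ give
\[
|z^+(t)|\le \int_{\mathbb{R}} k e^{-\alpha|t-s|}|f|_{Lip}|z^+(s)|\,ds,
\]
hence $\|z^+\|\le 2k\alpha^{-1}|f|_{Lip}\|z^+\|<\tfrac12\|z^+\|$ by \eqref{third-linear-condition}, forcing $z^+\equiv 0$. The only delicate point in the whole argument is the passage from the half-line variation-of-constants formula to the $\mathbb{R}$-integral with $G_A$; this is where Lemma \ref{bounded-Zero-solution} plays its essential role, and it is the step one must justify carefully since $F$ is merely bounded rather than integrable. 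Everything else is the same Banach-fixed-point arithmetic already used in Lemmas \ref{Lemma-2}--\ref{Lemma-3}, so no new idea is required beyond re-using the structure established there.
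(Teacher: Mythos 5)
Your argument is correct and is precisely what the paper intends: the paper's proof of Lemma \ref{Lemma-4} consists only of the remark that it is ``similar to that in Lemma \ref{Lemma-3}'', and your proposal carries out exactly that adaptation --- trivial existence of the zero solution, the variation-of-constants/Green-kernel rewriting justified by Lemma \ref{bounded-Zero-solution}, and the sup-norm contraction estimate $\|z^{+}\|\leq 2k\alpha^{-1}|f|_{Lip}\|z^{+}\|<\tfrac12\|z^{+}\|$ from \eqref{third-linear-condition}. No gaps; you even correct the paper's sign typo by using the convergent kernel bound $ke^{-\alpha|t-s|}$.
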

\begin{proof}
The proof is similar to that in Lemma \ref{Lemma-3}.
\end{proof}

\noindent
Now, we define {\bf two maps} $H, G: X\times Y\to X$ as follows:
\[H(u_{1},u_{2})=\left( \begin{array}{c}
    H_{1}(u_{1},u_{2}) \\
    H_{2}(u_{1},u_{2}) \\
\end{array} \right)
=
\left( \begin{array}{c}
    u_{1}+h(u_{1},u_{2})  \\
    u_{2}  \\
\end{array} \right),\]
\[G(v_{1},v_{2})=\left( \begin{array}{c}
    G_{1}(v_{1},v_{2}) \\
    H_{2}(v_{1},v_{2}) \\
\end{array} \right)
=
\left( \begin{array}{c}
    v_{1}+g(v_{1},v_{2})  \\
    v_{2}  \\
\end{array} \right),\]
where $u_{1}, v_{1}\in X$ and $u_2, v_2\in Y$.

\begin{mylemm}\label{Lemma-5}
$\left( \begin{array}{c}
    H_{1}(U_{1}(t,0,u_{10},u_{20}),U_{2}(t,0,u_{10},u_{20})) \\
    H_{2}(U_{1}(t,0,u_{10},u_{20}),U_{2}(t,0,u_{10},u_{20})) \\
\end{array} \right)$
is a mild solution of (\ref{linear-eq}).
\end{mylemm}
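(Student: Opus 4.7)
The plan is to verify the two components of $H\bigl(U_1(t,0,u_{10},u_{20}),U_2(t,0,u_{10},u_{20})\bigr)$ separately against the mild-solution identity for \eqref{linear-eq}. The second coordinate is immediate: by construction $H_2(u_1,u_2)=u_2$ and $U_2(t,0,u_{10},u_{20})=e^{Bt}u_{20}$, so $H_2(U_1(t),U_2(t))=e^{Bt}u_{20}=e^{Bt}H_2(u_{10},u_{20})$, which is a mild solution of $\partial_t v_2=Bv_2$ with initial datum $H_2(u_{10},u_{20})$.

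The substance is in the first coordinate. I would set $\tilde z(t):=h(U_1(t,0,u_{10},u_{20}),U_2(t,0,u_{10},u_{20}))$ and aim to prove
\[
U_1(t,0,u_{10},u_{20})+\tilde z(t)=e^{At}\bigl(u_{10}+h(u_{10},u_{20})\bigr)=e^{At}H_1(u_{10},u_{20}).
\]
First I would invoke the cocycle property $U_i(s,0,U_1(t),U_2(t))=U_i(s+t,0,u_{10},u_{20})$ for $i=1,2$; this follows from uniqueness of mild solutions because both sides solve the same Cauchy problem on $[0,\infty)$ (and extend to $\mathbb R$ since $e^{At},e^{Bt}$ are $C_0$-groups). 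Inserting this into the explicit formula $h(\xi,\eta)=-\int_{\mathbb R}G_A(-s)f(U_1(s,0,\xi,\eta),U_2(s,0,\xi,\eta))ds$ from Lemma~\ref{Lemma-2} and changing variables $\tau=s+t$ gives
\[
\tilde z(t)=-\int_{\mathbb R}G_A(t-\tau)\,f(U_1(\tau),U_2(\tau))\,d\tau.
\]

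The core computation is then to rewrite $e^{At}h(u_{10},u_{20})$ in terms of $\tilde z(t)$ and a convolution over $[0,t]$. Using $e^{At}P_\pm=P_\pm e^{At}$ and splitting the integral according to the sign of $-s$ and of $t-s$, a short case analysis yields the key identity
\[
e^{At}G_A(-s)=G_A(t-s)-\mathbf 1_{(0,t]}(s)\,e^{A(t-s)}, \qquad s\in\mathbb R.
\]
Integrating this against $f(U_1(s),U_2(s))$ produces
\[
e^{At}h(u_{10},u_{20})=\tilde z(t)+\int_0^{t}e^{A(t-s)}f(U_1(s),U_2(s))\,ds.
\]
Combining with the mild-solution representation $U_1(t)=e^{At}u_{10}+\int_0^{t}e^{A(t-s)}f(U_1(s),U_2(s))\,ds$ of \eqref{semilinear-eq} yields
\[
U_1(t)+\tilde z(t)=e^{At}\bigl(u_{10}+h(u_{10},u_{20})\bigr),
\]
which is precisely the mild-solution identity for $\partial_t v_1=Av_1$ with initial value $H_1(u_{10},u_{20})$.

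The main obstacle is the Green-kernel identity $e^{At}G_A(-s)=G_A(t-s)-\mathbf 1_{(0,t]}(s)e^{A(t-s)}$; it mixes the stable and unstable parts of the dichotomy, so one must carefully track which projection $P_+$ or $P_-$ sits in $G_A(-s)$ versus $G_A(t-s)$ on the four subregions $\{s\le 0\}$, $\{0<s\le t\}$, $\{s>t\}$ (for $t\ge 0$; the case $t<0$ is analogous and is handled either symmetrically or by appealing to the $C_0$-group structure of $e^{At}$). The boundedness of $f$ and the dichotomy estimates \textbf{(S2)}, \textbf{(S4)} guarantee absolute convergence of all the integrals involved, so the splittings and changes of variable are justified.
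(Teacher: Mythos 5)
Your proof is correct and follows essentially the same route as the paper: decompose $H_{1}(U_{1},U_{2})=U_{1}+h(U_{1},U_{2})$ and observe that the nonlinear terms cancel, leaving a mild solution of $\partial_{t}v_{1}=Av_{1}$. The paper argues formally by adding the two differential equations, whereas you verify the variation-of-constants identity directly via the cocycle property and the Green-kernel identity $e^{At}G_{A}(-s)=G_{A}(t-s)-\mathbf{1}_{(0,t]}(s)e^{A(t-s)}$, thereby supplying the details (notably the identification $\tilde z(t)=h(U_{1}(t),U_{2}(t))$) that the paper leaves implicit.
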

\begin{proof}
It is clear that
\[H_{1}(U_{1},U_{2})=U_{1}+h(U_{1},U_{2}),\quad H_{2}(U_{1},U_{2})=e^{Bt}u_{20},\]
and $H_{2}$ is a mild solution of the second equation in \eqref{linear-eq}.
Thus we only show that $H_{1}$ is a mild solution of the first equation in \eqref{linear-eq}.
Since $U_{1}$ is a mild solution of $u_{1}'=Au_{1}+f(u_{1},u_{2})$,
and $h(U_{1},U_{2})$ is a mild solution of $z'=Az-f(u_{1},u_{2})$, we have that $H_{1}$ is a mild solution of
\[ y'=u'_{1}+z'=Au_{1}+f(u_{1},u_{2})+Az-f(u_{1},u_{2})=A(u_{1}+z)=Ay.\]
\end{proof}

\begin{mylemm}\label{Lemma-6}
$\left( \begin{array}{c}
    G_{1}(V_{1}(t,0,v_{10},u_{20}),V_{2}(t,0,v_{10},v_{20})) \\
    G_{2}(V_{1}(t,0,v_{10},u_{20}),V_{2}(t,0,v_{10},v_{20})) \\
\end{array} \right)$
is a mild solution of (\ref{semilinear-eq}).
\end{mylemm}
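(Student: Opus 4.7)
The plan is to mirror the argument of Lemma \ref{Lemma-5}, exchanging the roles of the nonlinear and linear systems and replacing $h$ with $g$. Write $G_1(V_1,V_2) = V_1 + g(V_1,V_2)$ and $G_2(V_1,V_2)=V_2$. Since $V_2(t,0,v_{10},v_{20})=e^{Bt}v_{20}$ already mild-solves $u_2'=Bu_2$, the only thing to verify is that the first component
\[
y(t) := V_1(t,0,v_{10},v_{20}) + g\bigl(V_1(t,0,v_{10},v_{20}),\,V_2(t,0,v_{10},v_{20})\bigr)
\]
is a mild solution of $u_1'=Au_1+f(u_1,u_2)$ with $u_2=V_2$.

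The crucial technical point (not made explicit in the statement of Lemma \ref{Lemma-3}) is the following cocycle/shift identity: for every $t_0\in\mathbb{R}$, the function
\[
s \;\longmapsto\; g\bigl(V_1(s+t_0,0,v_{10},v_{20}),\,V_2(s+t_0,0,v_{10},v_{20})\bigr)
\]
is the unique bounded mild solution of
\[
w' = Aw + f\bigl(V_1(s+t_0,0,v_{10},v_{20}) + w,\;V_2(s+t_0,0,v_{10},v_{20})\bigr).
\]
I would establish this by using that $e^{At}$ and $e^{Bt}$ are $C_0$-groups, so $V_i(s+t_0,0,v_{10},v_{20}) = V_i(s,0,V_1(t_0),V_2(t_0))$ for $i=1,2$. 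Plugging this into the definition of $g$ from Lemma \ref{Lemma-3} identifies the shifted function as the unique bounded mild solution of the shifted equation; uniqueness (coming from Lemma \ref{bounded-Zero-solution} and the contraction argument in Lemma \ref{Lemma-3}) closes the identification.

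Granted this shift identity, the remainder is a direct addition of variation-of-constants formulas. Writing $w(t):=g(V_1(t),V_2(t))$, we have
\[
w(t) = e^{At}g(v_{10},v_{20}) + \int_0^t e^{A(t-s)} f\bigl(V_1(s)+w(s),\,V_2(s)\bigr)\,ds,
\]
while $V_1(t)=e^{At}v_{10}$. Adding these two identities yields
\[
y(t) = e^{At}\bigl(v_{10}+g(v_{10},v_{20})\bigr) + \int_0^t e^{A(t-s)} f\bigl(y(s),\,V_2(s)\bigr)\,ds,
\]
which is exactly the mild-solution formula for $u_1' = Au_1 + f(u_1,u_2)$ with initial condition $G_1(v_{10},v_{20})$ and $u_2 = V_2$. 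Combined with $G_2(V_1,V_2)=V_2$ mild-solving $u_2'=Bu_2$, this gives the claim.

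The step I expect to require the most care is the cocycle identity for $g$, because $g$ is defined only through a fixed-point construction on the whole real line and not as an explicit formula. Everything else is routine once the shift invariance of the bounded mild solution is in place, and the Lipschitz/boundedness hypotheses on $f$ together with \eqref{third-linear-condition} have already done the work of guaranteeing existence and uniqueness in Lemma \ref{Lemma-3}.
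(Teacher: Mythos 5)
Your proof is correct and follows essentially the same route as the paper, which simply adds the bounded mild solution $w$ of the auxiliary equation \eqref{nonlinear-eq-lemma3} to $V_1$ (the paper's proof of Lemma \ref{Lemma-6} just refers back to the additive argument of Lemma \ref{Lemma-5}). You are in fact more careful than the paper: the shift/cocycle identity $g(V_1(t),V_2(t))=w(t)$, which you isolate and justify via the group property and the uniqueness of the bounded mild solution, is used implicitly but never stated there, and your addition of variation-of-constants formulas is the rigorous version of the paper's formal differentiation $y'=v_1'+w'$.
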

\begin{proof}
The proof is similar to that of Lemma \ref{Lemma-5}.
\end{proof}

\begin{mylemm}\label{Lemma-7}
For any fixed $v_{1}\in X$ and $v_2\in Y$,
$\left( \begin{array}{c}
    H_{1}(G_{1}(v_{1},v_{2}),G_{2}(v_{1},v_{2})) \\
    H_{2}(G_{1}(v_{1},v_{2}),G_{2}(v_{1},v_{2})) \\
\end{array} \right)$
=
$\left( \begin{array}{c}
    v_{1} \\
    v_{2} \\
\end{array} \right)$.
\end{mylemm}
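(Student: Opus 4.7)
The plan is to prove $H\circ G=\mathrm{id}$ by lifting the identity to the whole linear orbit starting at $(v_1,v_2)$ and then invoking Lemma~\ref{bounded-Zero-solution} to force the residual bounded mild solution of the linear equation to vanish.

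First, I would fix $v_1\in X$ and $v_2\in Y$ and set $(V_1(t),V_2(t)):=(V_1(t,0,v_1,v_2),V_2(t,0,v_1,v_2))$. By Lemma~\ref{Lemma-6}, the curve $t\mapsto(G_1(V_1(t),V_2(t)),G_2(V_1(t),V_2(t)))$ is a mild solution of the semilinear system (\ref{semilinear-eq}). Applying Lemma~\ref{Lemma-5} to this mild solution then shows that $\Phi(t):=H(G(V_1(t),V_2(t)))$ is a mild solution of the linear system (\ref{linear-eq}).

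Second, using the explicit formulas $G_1(v_1,v_2)=v_1+g(v_1,v_2)$, $G_2(v_1,v_2)=v_2$ and $H_1(u_1,u_2)=u_1+h(u_1,u_2)$, $H_2(u_1,u_2)=u_2$, I would unfold
\[\Phi_1(t)=V_1(t)+g(V_1(t),V_2(t))+h\bigl(G_1(V_1(t),V_2(t)),V_2(t)\bigr),\qquad \Phi_2(t)=V_2(t).\]
The second component of $\Phi-V$ therefore vanishes identically, and the first component $D_1(t):=\Phi_1(t)-V_1(t)=g(V_1(t),V_2(t))+h(G_1(V_1(t),V_2(t)),V_2(t))$ is a mild solution of the decoupled linear equation $v_1'=Av_1$ (being a difference of two such mild solutions). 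By Lemmas~\ref{Lemma-2} and \ref{Lemma-3}, both $g$ and $h$ are uniformly bounded by $2k\alpha^{-1}|f|_\infty$, so $\|D_1\|\le 4k\alpha^{-1}|f|_\infty<\infty$. Lemma~\ref{bounded-Zero-solution} then forces $D_1\equiv 0$ on $\mathbb{R}$.

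Finally, I would evaluate at $t=0$, where $V_1(0)=v_1$ and $V_2(0)=v_2$, to obtain $H_1(G_1(v_1,v_2),G_2(v_1,v_2))=v_1$ and $H_2(G_1(v_1,v_2),G_2(v_1,v_2))=v_2$, which is the claim. The only real subtlety lies in the bookkeeping of the second step: one must check that $G_2$ is a projection so that the $Y$-component drops out trivially, and that what remains of $\Phi-V$ in the $X$-component sits inside the decoupled linear dynamics generated by $A$, which is precisely the setting where Lemma~\ref{bounded-Zero-solution} applies. No estimates beyond the boundedness of $g$ and $h$ are needed.
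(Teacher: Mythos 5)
Your proposal is correct and follows essentially the same route as the paper: pass the linear orbit through $G$ then $H$ via Lemmas \ref{Lemma-6} and \ref{Lemma-5}, observe the second component is untouched, note the first-component difference is a bounded mild solution of $z'=Az$ with bound $4k\alpha^{-1}|f|_{\infty}$ coming from Lemmas \ref{Lemma-2} and \ref{Lemma-3}, and kill it with Lemma \ref{bounded-Zero-solution}. The only cosmetic difference is that you write out $D_1=g+h$ explicitly where the paper uses a triangle inequality; the content is identical.
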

\begin{proof}
 Let $(v_{1}(t),v_{2}(t))^{T}$ be any solution of (\ref{linear-eq}). From Lemma \ref{Lemma-6},
$\left( \begin{array}{c}
    G_{1}(v_{1}(t),v_{2}(t)) \\
    G_{2}(v_{1}(t),v_{2}(t))\\
\end{array} \right)$
is a solution of (\ref{semilinear-eq}). Moreover, by Lemma \ref{Lemma-5},
$\left( \begin{array}{c}
    H_{1}(G_{1}(v_{1}(t),v_{2}(t)),G_{2}(v_{1}(t),v_{2}(t))) \\
    H_{2}(G_{1}(v_{1}(t),v_{2}(t)),G_{2}(v_{1}(t),v_{2}(t))) \\
\end{array} \right)$
is a solution of (\ref{linear-eq}), denoted by $(\hat{v}_{1}(t),\hat{v}_{2}(t))^{T}$.
Since
\[H_{2}(G_{1}(v_{1}(t),v_{2}(t)),G_{2}(v_{1}(t),v_{2}(t)))= G_{2}(v_{1}(t),v_{2}(t))=v_{2}(t),\]
we obtain that $\hat{v}_{2}(t)=v_{2}(t)$. Let $J(t)=\hat{v}_{1}(t)-v_{1}(t)$, then $J(t)$ is also a mild solution of $z'=Az$.
It follows from the definition of $H, G$ that
\[ \begin{split}
|J(t)|=&|H_{1}(G_{1}(v_{1}(t),v_{2}(t)),G_{2}(v_{1}(t),v_{2}(t)))-v_{1}(t)| \\
          \leq & |H_{1}(G_{1}(v_{1}(t),v_{2}(t)),G_{2}(v_{1}(t),v_{2}(t)))-G_{1}(v_{1}(t),v_{2}(t))|\\
          &+ |G_{1}(v_{1}(t),v_{2}(t))-v_{1}(t)|.
\end{split}\]
From Lemma \ref{Lemma-2} and Lemma \ref{Lemma-3}, we have that
$\|J\| \leq 2k\alpha^{-1}\cdot |f|_{\infty}+2k\alpha^{-1}\cdot |f|_{\infty}=4k\alpha^{-1}\cdot |f|_{\infty}$.
Thus in view of Lemma \ref{bounded-Zero-solution}, $J(t)\equiv 0$, namely, $\hat{v}_{1}(t)=v_{1}(t)$.
Since  $(v_{1}(t),v_{2}(t))^{T}$ is an arbitrary solution of (\ref{linear-eq}),  Lemma \ref{Lemma-7} holds.
\end{proof}

\begin{mylemm}\label{Lemma-8}
For any fixed $u_{1}\in X$ and $u_2\in Y$,
$\left( \begin{array}{c}
    G_{1}(H_{1}(u_{1},u_{2}),H_{2}(u_{1},u_{2})) \\
    G_{2}(H_{1}(u_{1},u_{2}),H_{2}(u_{1},u  _{2})) \\
\end{array} \right)$
=
$\left( \begin{array}{c}
    u_{1} \\
    u_{2} \\
\end{array} \right)$.
\end{mylemm}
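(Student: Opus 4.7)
The plan is to mirror the argument used in Lemma \ref{Lemma-7}, but now starting from a mild solution of the nonlinear system (\ref{semilinear-eq}) rather than the linear system (\ref{linear-eq}), and replacing the use of Lemma \ref{bounded-Zero-solution} with an application of Lemma \ref{Lemma-4}. The key observation is that the composition $G \circ H$ sends a nonlinear trajectory to another nonlinear trajectory with the same second coordinate, so the difference in the first coordinate solves precisely the equation appearing in Lemma \ref{Lemma-4}.

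More concretely, I would fix $(u_1, u_2) \in X \times Y$ and let $(U_1(t,0,u_1,u_2), U_2(t,0,u_1,u_2))^T$ be the mild solution of (\ref{semilinear-eq}) through $(u_1,u_2)$ at $t=0$. By Lemma \ref{Lemma-5}, applying $H$ along this trajectory produces a mild solution of the linear equation (\ref{linear-eq}). By Lemma \ref{Lemma-6}, applying $G$ to that linear trajectory produces a mild solution of the nonlinear equation (\ref{semilinear-eq}), which I denote by $(\hat{U}_1(t), \hat{U}_2(t))^T$. Since both $H_2$ and $G_2$ act as the identity on the second component, one reads off immediately that $\hat{U}_2(t) \equiv U_2(t,0,u_1,u_2)$.

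For the first component, set $J(t) := \hat{U}_1(t) - U_1(t,0,u_1,u_2)$. Because $\hat{U}_1$ and $U_1$ both satisfy the first line of (\ref{semilinear-eq}) with the same $U_2$ in the second slot of $f$, the function $J$ is a mild solution of
\begin{equation*}
J' = AJ + f(U_1 + J, U_2) - f(U_1, U_2).
\end{equation*}
By the triangle inequality together with the definitions of $H$ and $G$,
\begin{equation*}
|J(t)| \leq |g(H_1(U_1,U_2), H_2(U_1,U_2))| + |h(U_1, U_2)| \leq 4k\alpha^{-1} |f|_{\infty},
\end{equation*}
using the bounds on $g$ and $h$ provided by Lemma \ref{Lemma-3} and Lemma \ref{Lemma-2}. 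Thus $J$ is a bounded mild solution of the equation in Lemma \ref{Lemma-4}, whence $J \equiv 0$ and $\hat{U}_1(t) \equiv U_1(t,0,u_1,u_2)$. Evaluating at $t=0$ yields $G_1(H_1(u_1,u_2), H_2(u_1,u_2)) = u_1$, completing the proof.

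The main obstacle, such as it is, is recognizing that the uniqueness tool must now be Lemma \ref{Lemma-4} rather than Lemma \ref{bounded-Zero-solution}: the "difference equation" for $J$ has a nontrivial inhomogeneity from $f$, so a purely linear uniqueness of bounded solutions is not available. Once one writes down the difference equation and checks it coincides with the one already handled in Lemma \ref{Lemma-4}, the rest is just the bookkeeping of the bounds $\|h\|, \|g\| \leq 2k\alpha^{-1}|f|_\infty$ already in hand.
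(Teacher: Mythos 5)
Your proposal is correct and follows exactly the route the paper intends: its proof of this lemma is literally the one-line remark that one combines Lemma \ref{Lemma-4} with the argument of Lemma \ref{Lemma-7}, which is precisely what you carry out (forming $J=\hat{U}_1-U_1$, noting it solves the equation of Lemma \ref{Lemma-4}, bounding it by $4k\alpha^{-1}|f|_\infty$ via the bounds on $h$ and $g$, and concluding $J\equiv 0$). Your identification of why Lemma \ref{bounded-Zero-solution} must be replaced by Lemma \ref{Lemma-4} here is exactly the point the paper is gesturing at.
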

\begin{proof}
Combined with Lemma \ref{Lemma-4}, the proof of  Lemma \ref{Lemma-8} is similar to Lemma \ref{Lemma-7}.
\end{proof}

\subsection{Key lemma to prove the regularity}

Next lemma is a key lemma to prove the regularity of the conjugacy.
{\color{blue}
\begin{mylemm}\label{Lemma-9}
Let $(U_{1}(t,0,u_{10},u_{20}),U_{2}(t,0,u_{10},u_{20}))$ be a  mild solution of (\ref{semilinear-eq}).\\
(i) For any $\xi_1\in P_{+}(X),\eta_1\in P_{+} (Y)$,
 then (\ref{semilinear-eq}) has a unique bounded solution $(U_{1}(t),U_{2}(t))$
 satisfying for $t\geq 0$, $P_{+}(U_1(0),U_2(0))=(\xi_1,\eta_1)$; \\
(ii) For any $\xi_2\in P_{-}(X),\eta_2\in P_{-} (Y)$,
 then (\ref{semilinear-eq}) has a unique bounded solution $(U_{1}(t),U_{2}(t))$
 satisfying for $t\leq 0$, $P_{-}(U_1(0),U_2(0))=(\xi_2,\eta_2)$.\\
  Moreover,  if exponent $\varpi=(2k/\alpha)\cdot |f|_{Lip}<1$ and $\alpha_{1}=\alpha-k|f|_{Lip}/(1-\varpi)>0$,
then the following conclusions hold:\\
(1) for any $(u_{10}-\bar{u}_{10})\in P_{+}(X)$ and $(u_{20}-\bar{u}_{20})\in P_{+}(Y)$, we deduce that
\begin{equation}\label{dichotomy-application}
\begin{split}
&|U_{1}(t,0,u_{10},u_{20})-U_{1}(t,0,\bar{u}_{10},\bar{u}_{20})|+|U_{2}(t,0,u_{10},u_{20})-U_{2}(t,0,\bar{u}_{10},\bar{u}_{20})| \\
\leq & (1-\varpi)^{-1} \cdot(k e^{-\alpha_{1} t}\cdot|P_{+}(u_{10}-\bar{u}_{10})|+ M_{B}|P_{+}(u_{20}-\bar{u}_{20})|), \quad t\geq 0;
\end{split}
\end{equation}
(2) for any $(u_{10}-\bar{u}_{10})\in P_{-}(X)$ and $(u_{20}-\bar{u}_{20})\in P_{-}(Y)$, we have that
\begin{equation}\label{dichotomy-application-2}
\begin{split}
 &|U_{1}(t,0,u_{10},u_{20})-U_{1}(t,0,\bar{u}_{10},\bar{u}_{20})|+|U_{2}(t,0,u_{10},u_{20})-U_{2}(t,0,\bar{u}_{10},\bar{u}_{20})| \\
\leq & (1-\varpi)^{-1} \cdot(k e^{\alpha_{1} t}\cdot|P_{-}(u_{10}-\bar{u}_{10})|+ M_{B}|P_{-}(u_{20}-\bar{u}_{20})|), \quad t\leq 0.
\end{split}
\end{equation}
\end{mylemm}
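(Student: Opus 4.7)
I would handle each part via a standard Green-kernel integral equation combined with the Banach contraction principle for the existence/uniqueness claims (i), (ii), and then extract the quantitative estimates (1), (2) by feeding the integral representation into the dichotomy integral inequalities of Lemmas \ref{Dichotomy-inequality-1st} and \ref{Dichotomy-inequality-2ed}.

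First, for (i), the ansatz for a bounded mild solution on $[0,\infty)$ forces
\begin{equation*}
U_1(t) = e^{At}\xi_1 + \int_0^t e^{A(t-s)}P_+ f\bigl(U_1(s), U_2(s)\bigr)\,ds - \int_t^{\infty} e^{A(t-s)} P_- f\bigl(U_1(s), U_2(s)\bigr)\,ds,
\end{equation*}
together with $U_2(t) = e^{Bt}\eta_1$. The $P_-$-integral converges because $f$ is bounded and $|e^{A(t-s)}P_-| \le k e^{\alpha(t-s)}$ for $s\ge t$. Define an operator $\mathcal{F}$ on the Banach space of bounded continuous maps into $X\times Y$ by the right-hand side. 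Boundedness of $f$ gives a self-map, and the Lipschitz constant of $\mathcal{F}$ is $2k\alpha^{-1}|f|_{Lip}<1/2$ by \eqref{third-linear-condition}, so Banach's theorem produces the unique fixed point. Part (ii) is obtained by the symmetric ansatz on $(-\infty,0]$ with the roles of $P_+$ and $P_-$ exchanged.

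For estimate (1), fix two initial data with $u_{10}-\bar{u}_{10}\in P_+(X)$ and $u_{20}-\bar{u}_{20}\in P_+(Y)$ and set
\begin{equation*}
W_1(t) := U_1(t,0,u_{10},u_{20}) - U_1(t,0,\bar{u}_{10},\bar{u}_{20}), \qquad W_2(t) := U_2(t,0,u_{10},u_{20}) - U_2(t,0,\bar{u}_{10},\bar{u}_{20}).
\end{equation*}
Subtracting the integral equations, splitting via $I_X = P_+ + P_-$ and applying the dichotomy bounds yields, for $t\ge 0$,
\begin{equation*}
|W_1(t)| \le k e^{-\alpha t}|P_+(u_{10}-\bar{u}_{10})| + k|f|_{Lip}\!\int_0^t\! e^{-\alpha(t-s)}\bigl(|W_1(s)|+|W_2(s)|\bigr)ds + k|f|_{Lip}\!\int_t^{\infty}\! e^{\alpha(t-s)}\bigl(|W_1(s)|+|W_2(s)|\bigr)ds.
\end{equation*}
Since the second equation is linear, $W_2(t) = e^{Bt}(u_{20}-\bar{u}_{20})$, so $|W_2(t)|\le M_B|P_+(u_{20}-\bar{u}_{20})|$, where $M_B$ bounds $e^{Bt}$ on $P_+(Y)$. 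Adding the two bounds, $T(t):=|W_1(t)|+|W_2(t)|$ fits \eqref{ineq1} with $a_1=M_B|P_+(u_{20}-\bar{u}_{20})|$, $a_2=k|P_+(u_{10}-\bar{u}_{10})|$ and $a_3=a_4=k|f|_{Lip}$; the hypothesis $\varpi=2k|f|_{Lip}/\alpha<1$ is exactly the admissibility condition of Lemma \ref{Dichotomy-inequality-1st}, and that lemma delivers \eqref{dichotomy-application}. Part (2) follows symmetrically by time reflection and Lemma \ref{Dichotomy-inequality-2ed}.

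The main obstacle will be guaranteeing the a priori finiteness of $T(t)$ required to legally invoke the dichotomy integral inequality; this is where the existence step (i), with its uniform bound on the fixed point, feeds back into the proof of the estimate. A secondary subtlety is the bookkeeping of projections so that the $P_-$ part of the initial difference does not produce an unbounded seed term — this is precisely what forces $u_{10}-\bar{u}_{10}\in P_+(X)$ in (1) (and the dual condition in (2)), and it is what allows the seed to appear as the admissible $a_2 e^{-\alpha t}$ rather than a constant that would ruin the decay in \eqref{dichotomy-application}.
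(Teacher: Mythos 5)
Your proposal is correct and follows essentially the same route as the paper: the same Green-kernel fixed-point ansatz and contraction argument for (i)--(ii), and the same reduction of $T(t)=|W_1(t)|+|W_2(t)|$ to inequality \eqref{ineq1} with $a_1=M_B|P_+(u_{20}-\bar{u}_{20})|$, $a_2=k|P_+(u_{10}-\bar{u}_{10})|$, $a_3=a_4=k|f|_{Lip}$, followed by Lemmas \ref{Dichotomy-inequality-1st} and \ref{Dichotomy-inequality-2ed}. Your explicit remarks on the a priori boundedness of $T$ and on the projection bookkeeping are points the paper leaves implicit, but they do not change the argument.
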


\begin{proof}
  We claim the first part by means of Banach Contraction Principle.
  Let $\mathbb{BC}$ be the subset of all bounded continuous functions defined for $t\geq 0$.
  Define
\begin{equation}\label{U-1-U-2}
\begin{split}
\Phi(t):=& U_{1}(t,0,u_{10},u_{20})=e^{At}u_{10}+\int_{0}^{t} e^{A(t-s)}\cdot f( U_{1}(s,0,u_{10},u_{20}), U_{2}(s,0,u_{10},u_{20}))ds, \\
 \Psi(t):=& U_{2}(t,0,u_{10},u_{20})=e^{Bt}u_{20}.
 \end{split}
\end{equation}
  If $\mathcal{J}$ is the map defined by
\[ \mathcal{J}(\Phi,\Psi)(t)=
e^{At}\xi_1+e^{Bt}\eta_1+\int_{0}^{t} e^{A(t-s)}P_{+} f(\Phi(s),\Psi(s))ds-\int_{t}^{\infty} e^{A(t-s)}P_{-} f(\Phi(s),\Psi(s))ds,
\]
 where $\xi_1\in P_{+}(X)$ and $\eta_1\in P_{+}(Y)$.
   Then $\mathcal{J}(\Phi,\Psi)$ is continuous and bounded:
\[\begin{split}
 |\mathcal{J}(\Phi,\Psi)|\leq& ke^{-\alpha t}|\xi_1|+M_B |\eta_1|+\int_0^{\infty} |G_A (t-s)||f|_{\infty}ds \\
                 \leq& k|\xi_1|+M_B |\eta_1|+(2k|f|_{\infty}/\alpha)<\infty.
\end{split}\]
  Hence, $\mathcal{J}$ maps $\mathbb{BC}$ into itself.
  Note that $2k|f|_{Lip}<\alpha$, for any $\Phi_1, \Phi_2, \Psi_1, \Psi_2\in \mathbb{BC}$, we have
\[\begin{split}
   |\mathcal{J}(\Phi_1,\Psi_1)-\mathcal{J}(\Phi_2,\Psi_2)| \leq& \int_0^{\infty} ke^{-\alpha|t-s|}|f|_{Lip}
                              (|\Phi_1(s)-\Phi_2(s)|+|\Psi_1(s)- \Psi_2(s))ds \\
                               \leq& (2k|f|_{Lip}/\alpha) (\|\Phi_1-\Phi_2\|+\|\Psi_1- \Psi_2\|),
\end{split}\]
which implies that $\mathcal{J}$ is a contraction mapping in $\mathbb{BC}$, that is, there exists a unique fixed point
$(\Phi^*,\Psi^*)=\mathcal{J}(\Phi^*,\Psi^*)$ such that $(\Phi^*,\Psi^*)$ is bounded for $t\geq 0$.

  Similar to the procedure just shown, $(\Phi(t),\Psi(t))$ is a mild solution of (\ref{semilinear-eq}) with the required property (ii) for
$t\leq 0$.

  We now claim the second part by means of the generalized dichotomy inequalities, (i.e., Lemmas \ref{Dichotomy-inequality-1st} and
\ref{Dichotomy-inequality-2ed}).
  Define
 \[\Omega_{i}(t):=U_{i}(t,0,u_{10},u_{20})-U_{i}(t,0,\bar{u}_{10},\bar{u}_{20}), i=1,2.\]
  It follows that
 \[\begin{split}
 \sum_{i=1}^{2}|\Omega_{i}(t)|
                  \leq &  ke^{-\alpha t}|P_{+}(u_{10}-\bar{u}_{10})| +M_{B}|P_{+}(u_{20}-\bar{u}_{20})|\\
                         &+\int_{0}^{\infty} ke^{-\alpha|t-s|}\cdot |f|_{Lip}\cdot (\sum_{i=1}^{2}|\Omega_{i}(s)|)ds.
 \end{split}\]
  Using dichotomy inequality in Lemma \ref{Dichotomy-inequality-1st}, we conclude that
\[ \sum_{i=1}^{2}|\Omega_{i}(t)|\leq (1-\varpi)^{-1}\cdot(ke^{-\alpha_{1}t}|P_{+}(u_{10}-\bar{u}_{10})|+M_{B}|P_{+}(u_{20}-\bar{u}_{20})|),
\quad t\geq 0,\]
where $\varpi=(2k|f|_{Lip}/\alpha)<1$ and positive constant $\alpha_{1}$ satisfies $\alpha_{1}=\alpha-k|f|_{Lip}/(1-\varpi)$.
\\
Finally, analogous analysis for $t\leq 0$, dichotomy inequality in Lemma \ref{Dichotomy-inequality-2ed} ends the proof.

\end{proof}
}

\newtheorem{mmyrem}{\bf{Remark}\rm}[section]
\begin{mmyrem}\label{B-hyperbolic}
 If $B$ is a hyperbolic operator, i.e., $e^{Bt}$ admits a dichotomy projection, then by the condition of Lemma \ref{bounded-Zero-solution}---$U_{2}$ is bounded, we
 obtain $U_{2} \equiv 0$. It follows from Remark \ref{remmark-dichotomy-ieq} that
 \[ \sum_{i=1}^{2}|\Omega_{i}(t)|= |\Omega_{1}(t)|\leq (1-\varpi)^{-1}ke^{-\alpha_{1}|t|}|P_{+}(u_{10}-\bar{u}_{10})|.\]
\end{mmyrem}

\begin{mmyrem}\label{expBt-decaying}
If $e^{Bt}$  is an exponential stable group, i.e., there exists $M_{B}>0, \omega_{B}>0$ such that $|e^{Bt}|\leq M_{B} e^{-\omega_{B}|t|}$.
It is obvious that $e^{Bt}$ is bounded, then
 \[\begin{split}
 \sum_{i=1}^{2}|\Omega_{i}(t)| \leq &  ke^{-\alpha |t|}|P_{+}(u_{10}-\bar{u}_{10})| +M_{B}e^{-\omega_{B}|t|}|P_{+}(u_{20}-\bar{u}_{20})|\\
                            &+\int_{0}^{\infty} ke^{-\alpha|t-s|}\cdot |f|_{Lip}\cdot (\sum_{i=1}^{2}|\Omega_{i}(s)|)ds\\
                            \leq & De^{-d |t|}(|P_{+}(u_{10}-\bar{u}_{10})|+|P_{+}(u_{20}-\bar{u}_{20})|)\\
                            &+\int_{0}^{\infty} ke^{-\alpha|t-s|}\cdot |f|_{Lip}\cdot (\sum_{i=1}^{2}|\Omega_{i}(s)|)ds.
 \end{split}\]
By using dichotomy inequality in Lemma \ref{Dichotomy-inequality-1st} and Lemma \ref{Dichotomy-inequality-2ed}, we obtain
\[ \sum_{i=1}^{2}|\Omega_{i}(t)| \leq (1-\varpi)^{-1}De^{-d_{1}|t|}\cdot(|P_{+}(u_{10}-\bar{u}_{10})|+|P_{+}(u_{20}-\bar{u}_{20})|), \]
where $D=\max\{k, M_{B}\}$, $d=\min\{ \alpha,\omega_{B} \}$ and $0<d_{1}<d$.
\end{mmyrem}

\begin{mylemm}\label{Lemma-10}
    Let $(U_{1},U_{2})$ be a mild solution of (\ref{semilinear-eq}).
    If there exist positive constants $M_{A}, M_{B}>0$ and $\omega_{A}, \omega_{B}>0$ such that $|e^{At}|\leq M_{A}e^{\omega_{A}|t|},
|e^{Bt}|\leq M_{B}e^{\omega_{B}|t|}, t\in\mathbb{R}$,
    then the inequality
\begin{equation}\label{Bellman-application}
\begin{split}
\sum_{i=1}^{2}|\Omega_{i}(t)| \leq   M_{A}e^{\omega_{A}|t|}|u_{10}-\bar{u}_{10}|+M_{B}e^{\omega_{B}|t|}|u_{20}-\bar{u}_{20}|
                                + \int_{0}^{t} |e^{A(t-s)}|\cdot |f|_{Lip}\cdot \sum_{i=1}^{2}|\Omega_{i}(s)|ds
\end{split}
\end{equation}
implies
\[\sum_{i=1}^{2}|\Omega_{i}(t)| \leq M_{c}(|u_{10}-\bar{u}_{10}|+|u_{20}-\bar{u}_{20}|) e^{(M_{c}\cdot|f|_{Lip}+\omega_{c})t}, \]
where $M_{c}=\max\{M_{A},M_{B}\}, \omega_{c}=\max\{\omega_{A},\omega_{B} \}$.
\end{mylemm}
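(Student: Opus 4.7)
The plan is to reduce the inequality to the standard Bellman (Gronwall) form by absorbing $M_A,M_B,\omega_A,\omega_B$ into the uniform majorants $M_c,\omega_c$, then to eliminate the factor $e^{\omega_c t}$ appearing under the integral by the usual change of unknown $\psi(t) := e^{-\omega_c t}\sum_{i=1}^{2}|\Omega_i(t)|$, and finally to invoke classical Gronwall.

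First I would use the definitions $M_c=\max\{M_A,M_B\}$ and $\omega_c=\max\{\omega_A,\omega_B\}$ to estimate the two inhomogeneous terms together, obtaining
\[
M_A e^{\omega_A |t|}|u_{10}-\bar u_{10}|+M_B e^{\omega_B |t|}|u_{20}-\bar u_{20}|
\leq M_c e^{\omega_c |t|}\bigl(|u_{10}-\bar u_{10}|+|u_{20}-\bar u_{20}|\bigr),
\]
and simultaneously bound the kernel by $|e^{A(t-s)}|\le M_c e^{\omega_c|t-s|}$. Writing $C_0:=|u_{10}-\bar u_{10}|+|u_{20}-\bar u_{20}|$ and restricting to $t\ge 0$ so that $|t-s|=t-s$ on $[0,t]$, inequality \eqref{Bellman-application} becomes
\[
\sum_{i=1}^{2}|\Omega_i(t)|
\leq M_c C_0\, e^{\omega_c t}+M_c|f|_{Lip}\int_{0}^{t}e^{\omega_c(t-s)}\sum_{i=1}^{2}|\Omega_i(s)|\,ds.
\]

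Next I would factor out the exponential: multiplying both sides by $e^{-\omega_c t}$ and setting $\psi(t):=e^{-\omega_c t}\sum_{i=1}^{2}|\Omega_i(t)|$ yields
\[
\psi(t)\leq M_c C_0+M_c|f|_{Lip}\int_{0}^{t}\psi(s)\,ds.
\]
At this point Gronwall's inequality applies directly and gives $\psi(t)\leq M_c C_0\, e^{M_c|f|_{Lip}\,t}$, whence multiplying by $e^{\omega_c t}$ restores
\[
\sum_{i=1}^{2}|\Omega_i(t)|\leq M_c C_0\, e^{(M_c|f|_{Lip}+\omega_c)t},
\]
which is exactly the claimed bound.

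Frankly, there is no serious obstacle: the lemma is a bookkeeping variant of the classical Bellman inequality, and all the work is in lining up the constants and moving the exponential weight outside the integral. The one point to be careful about is the sign of $t$ in $e^{\omega_c|t|}$ versus $e^{\omega_c t}$; since the integral is written over $[0,t]$, the statement is meaningful for $t\ge 0$, and on that range the absolute values can be dropped. (A parallel argument with the integral over $[t,0]$ would handle $t\le 0$ if needed.) The proof's point is precisely what the authors emphasize in Section 1.2: Gronwall produces the expansive factor $e^{(M_c|f|_{Lip}+\omega_c)t}$, so even in the best case the resulting estimates are only good enough to yield Hölder, not Lipschitz, regularity, which motivates the dichotomy inequalities of Lemmas \ref{Dichotomy-inequality-1st}--\ref{Dichotomy-inequality-2ed}.
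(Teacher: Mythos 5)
Your proposal is correct and follows essentially the same route as the paper: bound the data and the kernel by $M_c e^{\omega_c t}$ and $M_c e^{\omega_c(t-s)}$, divide through by $e^{\omega_c t}$ (equivalently, pass to $\psi(t)=e^{-\omega_c t}\sum_{i=1}^{2}|\Omega_i(t)|$), and apply the classical Bellman--Gronwall inequality. Your explicit remark about restricting to $t\ge 0$ so that the absolute values in the exponents can be dropped is a point the paper glosses over, but otherwise the two arguments coincide.
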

\begin{proof}
It is obvious that
\begin{equation*}
\begin{split}
\sum_{i=1}^{2}|\Omega_{i}(t)| \leq & |e^{At}||u_{10}-\bar{u}_{10}|+|e^{Bt}||u_{20}-\bar{u}_{20}|
                                + \int_{0}^{t} |e^{A(t-s)}|\cdot |f|_{Lip}\cdot \sum_{i=1}^{2}|\Omega_{i}(s)|ds \\
                                \leq & M_{c}e^{\omega_{c}t}(|u_{10}-\bar{u}_{10}|+|u_{20}-\bar{u}_{20}|)
                                + \int_{0}^{t} M_{c}e^{\omega_{c}(t-s)}\cdot |f|_{Lip}\cdot \sum_{i=1}^{2}|\Omega_{i}(s)|ds.
\end{split}
\end{equation*}
That is
\[ \frac{\sum_{i=1}^{2}|\Omega_{i}(t)|}{e^{\omega_{c}t}}\leq M_{c}(|u_{10}-\bar{u}_{10}|+|u_{20}-\bar{u}_{20}|)
   + \int_{0}^{t} M_{c}e^{-\omega_{c}s}\cdot |f|_{Lip}\cdot \sum_{i=1}^{2}|\Omega_{i}(s)|ds.\]
By using Bellman inequality, it implies that
\[\sum_{i=1}^{2}|\Omega_{i}(t)|\leq M_{c}(|u_{10}-\bar{u}_{10}|+|u_{20}-\bar{u}_{20}|) e^{(M_{c}\cdot|f|_{Lip}+\omega_{c})t}. \]
\end{proof}

\begin{mmyrem}
In Lemma \ref{Lemma-9}, the dichotomy inequality will help us to prove the Lipschitz continuity of equivalent functions under the premise of constrained bounded solutions.
Without this premise, we can only rely on the Bellman inequality to obtain the H\"{o}lder continuity of equivalent functions.
\end{mmyrem}

\section{Proofs of main results}
    Now we are in position to prove our main results.

\subsection{  Proof of Theorem \ref{Thm-Linearization}.}
\begin{proof}
    From Lemma \ref{Lemma-7} and \ref{Lemma-8}, we have proved that $H$ and $G$ are homeomorphism.
    From Lemma \ref{Lemma-2} and \ref{Lemma-3}, we have derived that $h, g\in \mathbb{BC}(X)$.
    From Lemma \ref{Lemma-5} and \ref{Lemma-6}, $H$ sends the mild solution of semilinear evolution equation (\ref{semilinear-eq}) onto the mild solution
of linear evolution equation (\ref{linear-eq}) and vice versa.
    Hence, (\ref{semilinear-eq}) and (\ref{linear-eq}) are topologically conjugated. This completes the proof of Theorem \ref{Thm-Linearization}.
\end{proof}

\subsection{  Proof of Theorem \ref{Thm-Regulaity}}
We split the proof of Theorem \ref{Thm-Regulaity} into two   steps.
\begin{proof}
The aim in this part is to claim that $H$ is Lipschitzian, and $G$ is H\"{o}lder continuous.\\
{\bf Step 1-1.}
 {\color{blue}
 We will use  dichotomy inequality in Lemma \ref{Lemma-9} to prove the Lipshcitz continuity of the linearising map.
  We show that $|H(u)-H(\bar{u})|\leq p_{1}|u-\bar{u}|$, where $p_{1} \geq 1$ is a constant.
  From Lemma \ref{Lemma-2}, it follows that
\[ h(\xi,\eta)= -\int_{\mathbb{R}}G_{A}(-s)f(U_{1}(s,0,\xi,\eta), U_{2}(s,0,\xi,\eta))ds,\]
which is equivalent to
  \[ P_+ h(\xi,\eta)=-\int_{-\infty}^{0} e^{-As}P_{+}f(U_{1}(s,0,\xi,\eta),U_{2}(s,0,\xi,\eta))ds\]
and
  \[ P_{-} h(\xi,\eta)=\int_{0}^{\infty} e^{-As}P_{-}f(U_{1}(s,0,\xi,\eta),U_{2}(s,0,\xi,\eta))ds.\]
  Thus we get
\begin{equation*}
  \begin{split}
   R_1\triangleq & P_+ h(\xi,\eta)- P_+ h(\bar{\xi},\bar{\eta}) \\
        =&\int_{-\infty}^{0} e^{-As}P_{+} [f(U_{1}(s,0,\bar{\xi},\bar{\eta}),
U_{2}(s,0,\bar{\xi},\bar{\eta}))-f(U_{1}(s,0,\xi,\eta),U_{2}(s,0,\xi,\eta))]ds, \\
  R_2\triangleq & P_- h(\xi,\eta)- P_- h(\bar{\xi},\bar{\eta}) \\
  =& \int_{0}^{\infty} e^{-As}P_{-} [f(U_{1}(s,0,\bar{\xi},\bar{\eta}),
U_{2}(s,0,\bar{\xi},\bar{\eta}))-f(U_{1}(s,0,\xi,\eta),U_{2}(s,0,\xi,\eta))]ds.
 \end{split}
\end{equation*}
  In view of Lemma \ref{Lemma-9}, for any initial condition on $P_{+}(X)$ and $P_{+}(Y)$ (or $P_{-}(X), P_{-}(Y)$) of (\ref{semilinear-eq})
is bounded on semiaxis $[0,\infty)$ (or $(-\infty,0]$).
  Then by using Lemma \ref{Lemma-9}, part $t\leq 0$, we deduce that
\[\begin{split}
  |R_1|\leq & \int_{-\infty}^{0} k e^{\alpha s} \cdot|f|_{Lip}\cdot (\sum_{i=1}^{2}|\Omega_{i}(s)|)ds\\
   \leq& \int_{-\infty}^{0} k e^{\alpha s} \cdot|f|_{Lip}\cdot \frac{1}{1-\varpi}
             (k e^{-\alpha_{1} s}\cdot|P_{+}(\xi-\bar{\xi})| + M_{B}|P_{+}(\eta-\bar{\eta})|)ds,
\end{split}\]
and similarly, by using Lemma \ref{Lemma-9}, part $t\geq 0$,
\[\begin{split}
  |R_2|\leq  \int_{0}^{\infty} k e^{-\alpha s} \cdot|f|_{Lip}\cdot \frac{1}{1-\varpi}
             (k e^{\alpha_{1} s}\cdot|P_{-}(\xi-\bar{\xi})| + M_{B}|P_{-}(\eta-\bar{\eta})|)ds.
\end{split}\]
  Hence we conclude that
\[\begin{split}
 |R_1|+|R_2| \leq & \int_{-\infty}^{0} k e^{\alpha s} \cdot|f|_{Lip}\cdot \frac{1}{1-\varpi}
             (k e^{-\alpha_{1} s}\cdot|P_{+}(\xi-\bar{\xi})| + M_{B}|P_{+}(\eta-\bar{\eta})|)ds \\
     &+\int_{0}^{\infty} k e^{-\alpha s} \cdot|f|_{Lip}\cdot \frac{1}{1-\varpi}
             (k e^{\alpha_{1} s}\cdot|P_{-}(\xi-\bar{\xi})| + M_{B}|P_{-}(\eta-\bar{\eta})|)ds \\
     \leq & \frac{k^{2}\cdot|f|_{Lip}}{(\alpha-\alpha_{1})(1-\varpi)}\cdot[|P_{+}(\xi-\bar{\xi})|+|P_{-}(\xi-\bar{\xi})|] \\
     &+ \frac{k\cdot|f|_{Lip}\cdot M_{B}}{\alpha(1-\varpi)} \cdot[|P_{+}(\eta-\bar{\eta})|+|P_{-}(\eta-\bar{\eta})|] \\
          \leq& \max\left\{\frac{2k^{2}\cdot|f|_{Lip}}{(\alpha_{1}+\alpha)(1-\varpi)}, \frac{2k\cdot|f|_{Lip}\cdot M_{B}}{\alpha(1-\varpi)} \right\}(|\xi-\bar{\xi}|+|\eta-\bar{\eta}|).
\end{split}\]
  By the definition of $H(u)$, $u=(u_{1},u_{2})^{T}$,
\[\begin{split}
 | H(u)-H(\bar{u})| \leq & |u-\bar{u}|+\max\left\{\frac{2k\cdot|f|_{Lip}}{\alpha(1-\varpi)}, \frac{k^{2}\cdot|f|_{Lip}}{\alpha(1-\varpi)} \right\}
                               |u-\bar{u}| \\
                        \leq & \left(1+ \max\left\{\frac{2k^{2}\cdot|f|_{Lip}}{(\alpha_{1}+\alpha)(1-\varpi)}, \frac{2k\cdot|f|_{Lip}\cdot M_{B}}{\alpha(1-\varpi)} \right\}\right)|u-\bar{u}| \\
                        :=& p_{1} |u-\bar{u}|.
\end{split}\]
  This completes the proof of Step 1-1.
}
\newtheorem{myremark}{\bf{Remark}\rm}[section]
\begin{myremark}
  If $B$ is a hyperbolic operator, in view of Remark \ref{B-hyperbolic}, we obtain
\[\begin{split}
 | h(\xi,\eta)- h(\bar{\xi},\bar{\eta})| \leq &
                             \int_{\mathbb{R}} ke^{\alpha|s|}\cdot |f|_{Lip}\cdot \frac{ke^{-\alpha_{1}|s|}|\xi-\bar{\xi}|}{1-\varpi}ds \\
                         \leq & 2\int_{0}^{\infty} \frac{k^{2}|f|_{Lip}}{1-\varpi}|\xi-\bar{\xi}|\cdot e^{(\alpha_1-\alpha)s}ds \\
                         \leq & \frac{2k^{2}|f|_{Lip}}{(1-\varpi)(\alpha-\alpha_{1})}\cdot |\xi-\bar{\xi}|.
\end{split}\]
  Hence, $h$ is Lipschitzian, consequently, $H=x+h$ is also Lipschitzian.
\end{myremark}

\begin{myremark}
  If $e^{Bt}$ is an exponential stable group, it follows from Remark \ref{expBt-decaying} that
\[\begin{split}
 | h(\xi,\eta)- h(\bar{\xi},\bar{\eta})| \leq &
             \int_{\mathbb{R}} ke^{\alpha|s|}\cdot |f|_{Lip}\cdot \frac{D(|\xi-\bar{\xi}|+|\eta-\bar{\eta}|)e^{-d_{1}|s|}}{1-\varpi}ds \\
           \leq &  \frac{2kD|f|_{Lip}}{(1-\varpi)(\alpha-d_{1})}\cdot (|\xi-\bar{\xi}|+|\eta-\bar{\eta}|).
\end{split}\]
  It is easy to see that $H$ is Lipschitzian.
\end{myremark}
{\bf Step 1-2.} We are going to prove that the inverse $G=H^{-1}$ is H\"{o}lder continuous.
  By Lemma \ref{Lemma-3}, we know that $g(\xi,\eta)$ is a fixed point of the following map $\mathcal{T}$
\begin{equation}\label{g-expression}
\begin{split}
 (\mathcal{T}z)(0)=& \int_{\mathbb{R}} G_{A}(-s)\cdot f(V_{1}(s,0,\xi,\eta)+z(s),(V_{2}(s,0,\xi,\eta))ds \\
    =& \int_{\mathbb{R}} G_{A}(-s)\cdot f(e^{As}\xi+z(s),e^{Bs}\eta)ds.
\end{split}
\end{equation}
  Let $g_{0}(\xi,\eta)\equiv 0$, and by recursion define
\[ g_{m+1}(\xi,\eta)= \int_{\mathbb{R}} G_{A}(-s)\cdot f(e^{As}\xi+g_{m}(\xi,\eta),e^{Bs}\eta)ds.\]
  It is not difficult to show that
\[ g_{m}(\xi,\eta)\rightarrow  g(\xi,\eta), \quad \mathrm{as }\ \ m\rightarrow +\infty, \]
  uniformly with respect to $\xi,\eta$.
\\
  Note that $g_{0}(\xi,\eta)=g_{0}(t,(t,e^{At}\xi,e^{Bt}\eta))\equiv 0$. Thus, by induction, it is clear that for all $m (m\in\mathbb{N})$,
$g_{m}(\xi,\eta)=g_{m}(t,(t,e^{At}\xi,e^{Bt}\eta))$. Choose $p>0$ sufficiently large and $q>0$ sufficiently small such that
  \[\begin{cases}
    p>(8k/\alpha)\cdot|f|_{\infty}+\frac{4k|f|_{Lip}\cdot M_{c}}{\omega_{c}-\alpha}, \\
    q\omega_{c}<\alpha,\\
    0<\frac{2k|f|_{Lip}\cdot M_{c}^{q}}{\alpha-q\omega_{c}}<\frac{1}{2}.
  \end{cases}\]
  Then, we want to show that
\begin{equation}\label{g-assume-holder}
|g_{m}(\xi,\eta)-g_{m}(\bar{\xi},\bar{\eta})| \leq p(|\xi-\xi|+|\eta-\bar{\eta}|)^{q}.
\end{equation}
  Obviously, inequality (\ref{g-assume-holder}) holds if $m=0$. Now making the inductive assumption that (\ref{g-assume-holder}) holds.
  From (\ref{g-expression}), it follows that
\[\begin{split}
  &g_{m+1}(\xi,\eta)-g_{m+1}(\bar{\xi},\bar{\eta})\\
=& 2\int_{0}^{\infty} G_{A}(-s)\cdot [f(e^{As}\xi+g_{m}(\xi,\eta),e^{Bs}\eta)
 -f(e^{As}\bar{\xi}+g_{m}(\bar{\xi},\bar{\eta}),e^{Bs}\bar{\eta})]ds \\
=& 2\int_{0}^{\tau_{2}} G_{A}(-s)\cdot [f(e^{As}\xi+g_{m}(\xi,\eta),e^{Bs}\eta)
 -f(e^{As}\bar{\xi}+g_{m}(\bar{\xi},\bar{\eta}),e^{Bs}\bar{\eta})]ds \\
 &+2\int_{\tau_{2}}^{\infty} G_{A}(-s)\cdot [f(e^{As}\xi+g_{m}(\xi,\eta),e^{Bs}\eta)
 -f(e^{As}\bar{\xi}+g_{m}(\bar{\xi},\bar{\eta}),e^{Bs}\bar{\eta})]ds \\
\triangleq& 2(I_{1}+I_{2}),
\end{split}\]
  where $\tau_{2}=\frac{1}{\omega_{c}} \cdot \ln \frac{1}{|\xi-\xi|+|\eta-\bar{\eta}|}$.
  In view of $\alpha< \omega_{c}$, we have
\[\begin{split}
|I_{2}| \leq & \int_{\tau_{2}}^{\infty} ke^{-\alpha s}\cdot 2|f|_{\infty} ds \leq (2k/\alpha)\cdot |f|_{\infty}\cdot e^{-\alpha \tau_{2}}
 =(2k/\alpha)\cdot |f|_{\infty}\cdot [|\xi-\xi|+|\eta-\bar{\eta}|]^{\frac{\alpha}{\omega_{c}}}.
\end{split}\]
  Notice that $A, B$ are the generators of $C_{0}$-groups $e^{At}, e^{Bt}$ in $X$ with $|e^{At}|\leq M_{A} e^{\omega_{A}|t|}$, $|e^{Bt}|\leq M_{B} e^{\omega_{B}|t|}$, respectively. Then we have the following estimates
\[
|e^{At}x-e^{At}\bar{x}|\leq M_{A} |x-\bar{x}|e^{\omega_{A}|t|},\; |e^{Bt}x-e^{Bt}\bar{x}|\leq M_{B} |x-\bar{x}|e^{\omega_{B}|t|},
\]
  where $M_{A},M_{B}\geq 1, \omega_{A}, \omega_{B}>0$.
  In view of Lemma \ref{Lemma-10}, we have
\[\begin{split}
 |g_{m}(\xi,\eta)-g_{m}(\bar{\xi},\bar{\eta})| =& |g_{m}(t,(t,e^{At}\xi,e^{Bt}\eta))-g_{m}(t,(t,e^{At}\bar{\xi},e^{Bt}\bar{\eta}))| \\
       \leq & p[|e^{At}\xi-e^{At}\bar{\xi}|+|e^{Bt}\eta-e^{Bt}\bar{\eta}|]^{q} \\
       \leq & p[M_{A}e^{\omega_{A}|t|}|\xi-\bar{\xi}|+M_{B}e^{\omega_{B}|t|}|\eta-\bar{\eta}|]^{q} \\
       \leq & p[M_{c}e^{\omega_{c}|t|}(|\xi-\xi|+|\eta-\bar{\eta}|)]^{q}.
\end{split}\]
  Therefore,
\[\begin{split}
 |I_{1}| \leq & \int_{0}^{\tau_{2}} ke^{-\alpha s}\cdot |f|_{Lip}\cdot \{M_{A}e^{\omega_{A}s}|\xi-\bar{\xi}|
       + M_{B}e^{\omega_{B}s}|\eta-\bar{\eta}|\\
       &+  p[M_{c}e^{\omega_{c}s}(|\xi-\xi|+|\eta-\bar{\eta}|)]^{q}\}ds \\
       \leq & \int_{0}^{\tau_{2}} ke^{-\alpha s}\cdot |f|_{Lip}\cdot \{M_{c}e^{\omega_{c}s}(|\xi-\xi|+|\eta-\bar{\eta}|)\\
       & + p[M_{c}e^{\omega_{c}s}(|\xi-\xi|+|\eta-\bar{\eta}|)]^{q}\} ds \\
       \leq &  \int_{0}^{\tau_{2}} k|f|_{Lip}\cdot M_{c} e^{(\omega_{c}-\alpha)s}  (|\xi-\xi|+|\eta-\bar{\eta}|)ds\\
       &+ \int_{0}^{\tau_{2}} k|f|_{Lip}\cdot p M_{c}^{q}  e^{(\omega_{c}q-\alpha)s}  (|\xi-\xi|+|\eta-\bar{\eta}|)^{q} ds \\
       \leq & \frac{k|f|_{Lip}\cdot M_{c}(|\xi-\xi|+|\eta-\bar{\eta}|) }{\omega_{c}-\alpha} \cdot e^{(\omega_{c}-\alpha)s}|^{s=\tau_{2}}_{s=0} \\
       &+ \frac{k|f|_{Lip}\cdot p M_{c}^{q}(|\xi-\xi|+|\eta-\bar{\eta}|)^{q} }{\alpha-q\omega_{c}} \cdot e^{(q\omega_{c}-\alpha)s}|^{s=0}_{s=\tau_{2}}.
\end{split}\]
  Note that $\omega_{c}-\alpha>0, q\omega_{c}-\alpha<0$ $(0<q<1)$, then
\[\begin{split}
 |I_{1}| \leq  \frac{k|f|_{Lip}\cdot M_{c}}{\omega_{c}-\alpha} (|\xi-\xi|+|\eta-\bar{\eta}|)^{\frac{\alpha}{\omega_{c}}}
           + \frac{k|f|_{Lip}\cdot p M_{c}^{q}}{\alpha-q\omega_{c}}(|\xi-\xi|+|\eta-\bar{\eta}|)^{q}.
\end{split}\]
  Therefore, we obtain
\[\begin{split}
   &|g_{m+1}(\xi,\eta)-g_{m+1}(\bar{\xi},\bar{\eta})| \\
   \leq & 2(|I_{1}|+|I_{2}|) \\
     \leq&    \frac{2k|f|_{Lip}\cdot M_{c}}{\omega_{c}-\alpha} (|\xi-\xi|+|\eta-\bar{\eta}|)^{\frac{\alpha}{\omega_{c}}}
    + \frac{2k|f|_{Lip}\cdot p M_{c}^{q}}{\alpha-q\omega_{c}}(|\xi-\xi|+|\eta-\bar{\eta}|)^{q} \\
    &+(4k/\alpha)\cdot |f|_{\infty}\cdot [|\xi-\xi|+|\eta-\bar{\eta}|]^{\frac{\alpha}{\omega_{c}}} \\
\leq& p \cdot [|\xi-\xi|+|\eta-\bar{\eta}|]^{q}.
\end{split}\]
  It implies that
\[ |g(\xi,\eta)-g(\bar{\xi},\bar{\eta})| \leq p \cdot [|\xi-\bar{\xi}|+|\eta-\bar{\eta}|]^{q},\quad \mathrm{as}\quad  m\rightarrow\infty.\]
  By the definition of $G(v)$, $v=(v_{1},v_{2})^{T}$, we obtain that
\[ |G(v)-G(\bar{v})| \leq (1+p)\cdot |v-\bar{v}|^{q}:=p_{2}\cdot |v-\bar{v}|^{q}.\]
  This completes the proof of Step 1-2.

\begin{myremark}\label{g-holder}
  We now explain that $G$ cannot be improved to be Lipschitzian.
  In fact, in proving the H\"{o}lder regularity of $G$, we divide $g_{m+1}(\xi,\eta)-g_{m+1}(\bar{\xi},\bar{\eta})$ into two parts: $I_{1},I_{2}$.
  If we directly prove that $G$ is Lipschitzian, then the   contradiction appears as follows:
\[\begin{split}
 |g_{m+1}(\xi,\eta)-g_{m+1}(\bar{\xi},\bar{\eta})| \leq & \int_{0}^{\infty} ke^{-\alpha s}\cdot |f|_{Lip}\cdot \{M_{A}e^{\omega_{A}s}|\xi-\bar{\xi}|
       + M_{B}e^{\omega_{B}s}|\eta-\bar{\eta}|\\
       &+  p[M_{c}e^{\omega_{c}s}(|\xi-\xi|+|\eta-\bar{\eta}|)]^{q}\}ds \\
       \leq &  \int_{0}^{\infty} k|f|_{Lip}\cdot M_{c} e^{(\omega_{c}-\alpha)s}  (|\xi-\xi|+|\eta-\bar{\eta}|)ds\\
       &+ \int_{0}^{\infty} k|f|_{Lip}\cdot p M_{c}^{q}  e^{(\omega_{c}q-\alpha)s}  (|\xi-\xi|+|\eta-\bar{\eta}|)^{q} ds.
\end{split}\]
  Since $\omega_{c}-\alpha>0$, the integral $\int_{0}^{\infty}e^{(\omega_{c}-\alpha)s}ds$ is divergent
in the above first right-side estimation.
Hence, $G$ cannot be improved to be Lipschitzian.
\end{myremark}
\end{proof}

\subsection{  Proof of Theorem \ref{Thm-Regulaity2} }

\begin{proof}
  The goal in this part is to claim that $H$ and $G$ are H\"{o}lder continuous based on Bellman inequality.\\
  {\bf Step 2-1.}
  From the points of our mechanism, we say that the linearising map is merely H\"{o}lder continuous based on the Bellman inequality in Lemma \ref{Lemma-10}.
   Without loss of generality, assume that $|\xi-\bar{\xi}|+|\eta-\bar{\eta}|<1$.
  Set $\tau_{1}=\frac{1}{\omega_{c}+M_{c}|f|_{Lip}}\cdot \ln \frac{1}{|\xi-\bar{\xi}|+|\eta-\bar{\eta}|}$, we see that
\[\begin{split}
 &h(\xi,\eta)- h(\bar{\xi},\bar{\eta})\\
 =& \int_{\mathbb{R}}G_{A}(-s) [f(U_{1}(s,0,\bar{\xi},\bar{\eta}), U_{2}(s,0,\bar{\xi},\bar{\eta}))
                                      -f(U_{1}(s,0,\xi,\eta),U_{2}(s,0,\xi,\eta))]ds\\
 =&2\int_{0}^{\tau_{1}} G_{A}(-s) [f(U_{1}(s,0,\bar{\xi},\bar{\eta}), U_{2}(s,0,\bar{\xi},\bar{\eta}))
                                      -f(U_{1}(s,0,\xi,\eta),U_{2}(s,0,\xi,\eta))]ds\\
                                     &+ 2\int_{\tau_{1}}^{\infty} G_{A}(-s) [f(U_{1}(s,0,\bar{\xi},\bar{\eta}), U_{2}(s,0,\bar{\xi},\bar{\eta}))
                                      -f(U_{1}(s,0,\xi,\eta),U_{2}(s,0,\xi,\eta))]ds\\
                                      \triangleq& 2(J_{1}+J_{2}),
\end{split}\]
  Note that $\alpha< \omega_{c}$, we have
\[\begin{split}
|J_{2}| \leq & \int_{\tau_{1}}^{\infty} ke^{-\alpha s}\cdot 2|f|_{\infty} ds \leq (2k/\alpha)\cdot |f|_{\infty}\cdot e^{-\alpha \tau_{1}}\\
 =& (2k/\alpha)\cdot |f|_{\infty}\cdot [|\xi-\xi|+|\eta-\bar{\eta}|]^{\frac{\alpha}{\omega_{c}+M_{c}|f|_{Lip}}}.
\end{split}\]
  Since $M_{c}\cdot |f|_{Lip}+\omega_{c}>\alpha$, and it follows from Lemma \ref{Lemma-10} that
\[\begin{split}
 |J_{1}| \leq & \int_{0}^{\tau_{1}} ke^{-\alpha s}\cdot |f|_{Lip}\cdot M_{c}\cdot (|\xi-\xi|+|\eta-\bar{\eta}|)
                     e^{(M_{c}\cdot|f|_{Lip}+\omega_{c})s}ds\\
            \leq & k\cdot|f|_{Lip}\cdot M_{c}\cdot (|\xi-\xi|+|\eta-\bar{\eta}|)\int_{0}^{\tau_{1}}e^{(M_{c}\cdot|f|_{Lip}+\omega_{c}-\alpha)s}ds\\
            \leq & \frac{k\cdot|f|_{Lip}\cdot M_{c}}{M_{c}\cdot|f|_{Lip}+\omega_{c}-\alpha}\cdot
               [|\xi-\xi|+|\eta-\bar{\eta}|]^{1-\frac{M_{c}\cdot|f|_{Lip}+\omega_{c}-\alpha}{\omega_{c}+M_{c}|f|_{Lip}}} \\
             \leq & \frac{k\cdot|f|_{Lip}\cdot M_{c}}{M_{c}\cdot|f|_{Lip}+\omega_{c}-\alpha}\cdot[|\xi-\xi|+|\eta-\bar{\eta}|]^{\frac{\alpha}{\omega_{c}+M_{c}|f|_{Lip}}}.
\end{split}\]
  Therefore,
\[\begin{split}
|h(\xi,\eta)- h(\bar{\xi},\bar{\eta})| \leq & 2(|J_{1}|+|J_{2}|) \\
                        \leq & \left[(4k/\alpha) |f|_{\infty}+ \frac{2k|f|_{Lip} M_{c}}{M_{c}|f|_{Lip}+\omega_{c}-\alpha}\right]
                       [|\xi-\xi|+|\eta-\bar{\eta}|]^{\frac{\alpha}{\omega_{c}+M_{c}|f|_{Lip}}}.
\end{split}\]
  Further, it implies that $H$ is H\"{o}lder continuous, i.e.,
\[ |H(u)-H(\bar{u})| \leq \tilde{p}_{1}\cdot |u-\bar{u}|^{\tilde{q}}.\]
{\bf Step 2-2.}
  We  show that $G$ is also H\"{o}lder continuous.
  From Step 1-2, we immediately obtain it.
Hence, we complete the proof of Theorem \ref{Thm-Regulaity2}.
\end{proof}

\subsection{Proof of Theorem \ref{Local-theorem}}
\begin{proof}
  We just verify that it satisfies all the conditions of Theorem \ref{Thm-Linearization}.
  Since $\delta$ is said to be locality radius of a spherical neighborhood $\mathcal{B}(0,\delta)$,
it is clear to see that $L(\sqrt{2}\delta,\sqrt{2}\delta)$ is a small
constant and
\[\begin{split}
 |f_{\delta}|_{Lip}=& 9L(\sqrt{2}\delta,\sqrt{2}\delta), \\
 |f_{\delta}|_{\infty}=& 2\sqrt{2}\delta\cdot L(\sqrt{2}\delta,\sqrt{2}\delta)<\infty.
\end{split}\]
In addition, from the other conditions of Theorem \ref{Local-theorem} we can obtain its local linearization in $\overline{\mathcal{B}}(0,\delta)$.
\end{proof}

\section{Applications}
  We consider the coupled $parabolic$ equation and $ordinary$ differenatial equation:
\begin{equation}\label{App-1}
  \begin{cases}
   \frac{\partial u}{\partial t}=\frac{\partial^{2} u}{\partial x^{2}}+f(u,v), \; (0<x<\pi),\\
   \frac{dv}{dt}=Bv,\\
   u(0,t)=0, \; u(\pi,t)=0,\\
   u(x,0)=u_{0},\; v(0)=v_{0}.
  \end{cases}
\end{equation}
  It is easy to see that $B$ is the generator of $C_{0}$-group $e^{Bt}$ on Banach space $X$, and
\[ D(B)=\{ v_{0}\in X|\;\mathrm{ there}\;\mathrm{ exists}\; \mathrm{a}\; \mathrm{constant}\; h\;
           \mathrm{such}\; \mathrm{that}\; \lim\limits_{h\rightarrow 0}\frac{e^{Bh}v_{0}-v_{0}}{h}\; \mathrm{holds}\}.\]
  Define the linear operator $A$ by
\[ A\psi(x):= -\frac{\partial^{2}\psi(x)}{\partial x^{2}}, \; 0<x<\pi,\]
where $\psi$ is a smooth function on $[0,\pi]$ with $\psi(0)=0, \psi(\pi)=0$.
  Using Friedrichs theorem in \cite{Henry-book},  $A$ can be extended to a self-adjoint densely defined linear operator in $L^{2}(0,\pi)$.
  In this cases,
\[ D(A)=\{\psi\in L^{2}(0,\pi)|A\psi\in L^{2}(0,\pi)=H_{0}^{1}(0,\pi)\cap H^{2}(0,\pi)\},\]
and the spectrum $\sigma(A)$ of $A$ consists only of simple eigenvalues $\lambda_{n}=n^{2}, (n=1,2,\cdots)$ with corresponding eigenfunctions
$\psi_{n}(x)=(2/\pi)^{\frac{1}{2}}\sin nx$.
  Then the coupled system can be rewritten as a differential equation on Banach space:
\begin{equation}\label{App-2}
  \partial_{t} u=-Au+f(u, e^{Bt}v_{0}).
\end{equation}
  Now, we give a version of linearization theorem for equations (\ref{App-1}).
\newtheorem{app-theorem}[exam]{Theorem}
\begin{app-theorem}\label{Application-theorem}
  Assume that $\partial_{t}u=-Au$ admits an exponential dichotomy. If the nonlinearity $f$ is bounded and Lipschitzian, and $|f|_{Lip}<1$.
  Then equations (\ref{App-1}) is topologically conjugated to its linear parts.
\end{app-theorem}
\begin{proof}
  We just verify that this theorem satisfies all of the conditions for Theorem \ref{Thm-Linearization}.
  It is known that $f$ is bounded and Lipschitzian.
  It is easy to see that equation (\ref{App-2}) has a global existence and uniqueness result, i.e.,
\begin{equation}\label{App-IE}
  u(t)=e^{-At}u_{0}+\int_{0}^{t}e^{-A(t-s)}\cdot f(u(s), e^{Bs}v_{0}) ds,
\end{equation}
which comes from the variation of constants formula.
  Since the spectrum $\sigma(A)$ consists of simple eigenvalues $\lambda_{n}$, we clear that $A$ admits a dichotomy projection (in fact, we know that
it is a trivial dichotomy).
  Without loss of generality, taking $4k\alpha^{-1}=|\max\{-\lambda_{n}\}|=1$.
  Hence, condition (\ref{third-linear-condition}) can be rewritten as: $|\max\{-\lambda_{n}\}|\cdot |f|_{Lip}=|f|_{Lip}<1$.
  Therefore, all the conditions of Theorem \ref{Thm-Linearization} are satisfied.
  We say that equations (\ref{App-1}) is topologically conjugated to its linear equations.
\end{proof}

\noindent {\bf Example.}  Consider the Hodekin-Huxley equations for the nerve axon:
\begin{equation}\label{App-3}
  \begin{cases}
  C\frac{\partial V}{\partial t}=\frac{1}{r_{e}+r_{i}}\frac{\partial^{2} V}{\partial x^{2}}-g_{k} n^{4}(V-E_{k})-g_{Na} m^{3}h(V-E_{Na}),\\
  \frac{\partial n}{\partial t}=\alpha_{n}(v)(1-n)-\beta_{n}(v)n, \\
  \frac{\partial m}{\partial t}=\alpha_{m}(v)(1-m)-\beta_{m}(v)m, \\
  \frac{\partial h}{\partial t}=\alpha_{h}(v)(1-h)-\beta_{h}(v)h,
  \end{cases}
\end{equation}
where $V$ denote the electrical potential and $C$ denote the membrane capacitance.
  The quantities $m, n, h$, which vary between $0$ and $1$, and describe the changes in the conductance of the axon membrane for sodium ($Na$)
and potassium ($K$).
  One can see Cole \cite{Cole} for more details.
  Here we take $\alpha_{n}, \beta_{n}$, etc. are all positive constants.
  Therefore, equations (\ref{App-3}) is a coupled equations with a parabolic equation and ODEs, it can be written as:
\begin{equation}\label{App-4}
  \begin{cases}
  \frac{\partial V}{\partial t}=-AV-g_{k} n^{4}(V-E_{k})-g_{Na} m^{3}h(V-E_{Na}),\\
  \frac{\partial n}{\partial t}=\gamma_{n}n-\alpha_{n}, \\
  \frac{\partial m}{\partial t}=\gamma_{m}m-\alpha_{m}, \\
  \frac{\partial h}{\partial t}=\gamma_{h}h-\alpha_{h},
  \end{cases}
\end{equation}
where $-AV=\frac{C^{-1}}{r_{e}+r_{i}}\frac{\partial^{2} V}{\partial x^{2}}$, and $C, g_{k}, E_{k}, g_{Na},E_{Na},$ etc. are constants.
  Define $f(V,n,m,h):=-g_{k} n^{4}(V-E_{k})-g_{Na} m^{3}h(V-E_{Na})$,
since $m, n, h\in [0,1]$, we can check that $f$ is Lipschitzian.
 Further if $f$ is bounded.
  Since $-A$ has a dichotomy projection, $\gamma_{n}, \gamma_{m}, \gamma_{h}$ are constants.
  It follows from Theorem \ref{Application-theorem} that equations (\ref{App-4}) can be completely linearised.

\end{document}